\documentclass[11pt]{amsart}

\usepackage{amsmath, amssymb, amscd, cancel, graphicx, paralist, soul, stmaryrd}

\usepackage{mathdots}

\headheight=7pt         \topmargin=14pt
\textheight=574pt       \textwidth=445pt
\oddsidemargin=18pt     \evensidemargin=18pt

\usepackage[all]{xy}
\usepackage{multirow}
\usepackage{longtable}
\usepackage{array}

\setlength{\parskip}{4pt}

\newtheorem{thm}{Theorem}[section]

\newtheorem{cor}[thm]{Corollary}
\newtheorem{lem}[thm]{Lemma}

\newtheorem{prop}[thm]{Proposition}

\clubpenalty=3000
\widowpenalty=3000

\def\bQ{{\mathbb Q}}

\def\bZ{{\mathbb Z}}

\def\del{{\partial}}

\def\int{{\text{int}}}
\def\lk{{\mathrm{lk}}}
\def\mod{{\textup{mod} \;}}

\begin{document}

\title[Alternating links and definite surfaces]%
{Alternating links and definite surfaces}

\author[Joshua Evan Greene]{Joshua Evan Greene}

\address{Department of Mathematics, Boston College\\ Chestnut Hill, MA 02467}

\email{joshua.greene@bc.edu}

\maketitle

\medskip

\noindent {\bf Abstract.} We establish a characterization of alternating links in terms of definite spanning surfaces.  We apply it to obtain a new proof of Tait's conjecture that reduced alternating diagrams of the same link have the same crossing number and writhe. We also deduce a result of Banks and Hirasawa-Sakuma about Seifert surfaces for special alternating links.  The appendix, written by Juh\'asz and Lackenby, applies the characterization to derive an exponential time algorithm for alternating knot recognition.

\medskip

\noindent {\bf MSC classes:}  05C21, 05C50, 11H55, 57M15, 57M25, 57M27


\section{Introduction.}

\begin{center}
{\em ``What is an alternating knot?"}  -- Ralph Fox
\end{center}

A link diagram is alternating if its crossings alternate over and under around each link component, and a link is alternating if it admits an alternating diagram.  The opening question due to Fox seeks a characterization of alternating links in terms intrinsic to the link complement \cite[p.32]{lickorish:book}.  We establish such a characterization here in terms of definite spanning surfaces.

To describe it, a compact surface in a $\bZ / 2 \bZ$ homology sphere carries a natural pairing on its ordinary first homology group, mildly generalizing a definition by Gordon and Litherland \cite{gl:sig}.  An alternating diagram of a non-split alternating link in $S^3$ yields an associated pair of black and white chessboard spanning surfaces for the link, and their pairings are respectively negative and positive definite.  We establish the following converse:

\begin{thm}
\label{t: alt definite}
Let $L$ be a link in a $\bZ/2\bZ$ homology sphere with irreducible complement, and suppose that it bounds both a negative definite surface and a positive definite surface.  Then $L$ is a non-split alternating link in $S^3$, and it has an alternating diagram whose associated chessboard surfaces are isotopic rel boundary to the two given surfaces.
\end{thm}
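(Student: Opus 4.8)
The plan is to extract a link diagram from the pair $(F_+,F_-)$, prove that it is planar and alternating, and then show that $L$ in its ambient manifold is precisely that link with $F_+,F_-$ as its chessboard surfaces. First, normalize: using irreducibility and definiteness one reduces to the case that $F_+$ and $F_-$ are incompressible, that they meet $L=\del F_+=\del F_-$ in a standard local model, and that they meet each other transversally. Then $F_+\cap F_-$ is a union of arcs $a_1,\dots,a_k$ with endpoints on $L$: a closed intersection curve is removable if innermost and compresses a surface otherwise. The arcs cut $F_+$ into ``white plaquettes'' and $F_-$ into ``black plaquettes'', and near $L$ each $a_i$ looks like the intersection of the two chessboard bands at a crossing. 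One thereby obtains a combinatorial gadget $\cC$: a $4$-valent graph $\Gamma$ carried by $L$ with vertex set $\{a_i\}$, together with the cyclic data and plaquette incidences --- exactly the data of a (possibly non-prime) link diagram, \emph{except} that planarity of $\Gamma$ is not yet known. From the plaquette structure, $H_1(F_+)$ is the first homology of the ribbon graph dual to the white plaquettes and $\G_{F_+}$ is a weighted Laplacian of a graph $G_+$; likewise $\G_{F_-}$ for a graph $G_-$, and $G_+,G_-$ are abstractly (matroid-)dual.

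Second --- the heart --- one must prove that $\Gamma$ is planar, so that $\cC$ is an honest diagram on $S^2$, and that the ambient manifold is $S^3$; the point is that definiteness of \emph{both} pairings is incompatible with non-planarity. Pass to the double branched cover $Y$ of the ambient $\bZ/2\bZ$ homology sphere over $L$, a rational homology sphere. Each $F_\pm$ produces a smooth four--manifold $W_\pm$ bounded by $Y$ with $b_2(W_\pm)=\rk H_1(F_\pm)$ and intersection form the Gordon--Litherland pairing of $F_\pm$, definite of the expected sign; glue $X=W_+\cup_Y(-W_-)$, a closed positive definite four--manifold, whose intersection lattice is therefore the standard $\bZ^N$ by Donaldson's theorem, with $N=\rk H_1(F_+)+\rk H_1(F_-)$. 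A Mayer--Vietoris argument places $H_2(W_+)\oplus H_2(W_-)$ in $H_2(X)$ with finite cokernel, embedding the two Gordon--Litherland lattices orthogonally and with complementary ranks into $\bZ^N$, glued along discriminant data recording $H_1(Y)$. One then classifies such an embedded orthogonal pair, subject to the further geometric constraints carried by $\cC$: it must be the pair of cut lattices of a connected \emph{plane} graph $G$ and its planar dual, with $N=|E(G)|$ and the standard basis in bijection with the crossings. (More combinatorially: the $3$-dimensional fitting of $F_+$ and $F_-$ forces $G_+,G_-$ to admit a common planar embedding --- Whitney's criterion --- while definiteness forces the plaquettes to be disks and $\Gamma$ to tile $S^2$.) I expect this to be the principal obstacle: one has to exclude the many orthogonal lattice pairs that satisfy every determinant and discriminant-form constraint yet are not graphic.

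Third, reconstruct and identify. From the plane graph $G$ form Tait's alternating diagram $D=D(G)$ on $S^2\subset S^3$, with $|E(G)|$ crossings and black and white chessboard surfaces $B,W$ that are negative and positive definite. The edge/crossing bijection identifies $\cC$ with $B\cap W$, and hence the plaquettes of $F_\mp$ with those of $B,W$; gluing these identifications along $\cC$ produces a homeomorphism of ambient manifolds carrying $F_-\cup F_+\cup L$ onto $B\cup W\cup L_D$. Hence the ambient manifold is $S^3$, $L$ is the non-split alternating link $L_D$, and $F_+,F_-$ are isotopic rel boundary to $W,B$. Incompressibility and irreducibility are used to upgrade the piecewise identification to an ambient isotopy and to rule out spurious gluings of the plaquettes. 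The delicate points throughout are the planarity/graphicity argument of the second step and the rigidity required to reassemble the identification in the third.
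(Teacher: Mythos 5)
Your first step (arcs of intersection, the local crossing model at each arc, removal of closed intersection curves) and your third step (reconstruction of the diagram and identification of the chessboard surfaces) track the actual argument reasonably well. The genuine gap is exactly where you flag it: step two, the planarity of $\Gamma$ and the identification of the ambient manifold with $S^3$, is not proved --- it is only a proposed strategy, and the strategy you propose (double branched covers, Donaldson's theorem, and a classification of orthogonal pairs of definite lattices embedded in $\bZ^N$) is both far harder than necessary and unlikely to close as stated. The lattice-theoretic data of such an embedded pair does not by itself certify that the pair arises from the cut/flow lattices of a plane graph and its dual; excluding the non-graphic embeddings is a serious open-ended problem, and you would additionally need to justify the branched-cover and gluing constructions over a general $\bZ/2\bZ$ homology sphere. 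No four-dimensional input is needed.

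The correct resolution is a purely three-dimensional Euler characteristic count driven by the Gordon--Litherland signature formula. Once the closed intersection curves are eliminated (which itself uses definiteness: a closed curve $\gamma$ of $S_+\cap S_-$ receives the same framing from both surfaces, so $|\gamma|_{S_+}=|\gamma|_{S_-}=0$ by the opposite signs of the two pairings; it then bounds subsurfaces of $S_\pm$ that are definite of opposite signs and are Seifert surfaces for the same knot, hence both disks, and irreducibility removes the resulting sphere --- note that ``compresses a surface otherwise'' is not the right reason), the number $c$ of intersection arcs equals $\frac12 e(S_+)-\frac12 e(S_-)$, which by Theorem \ref{t: gl} equals $\sigma(S_+)-\sigma(S_-)=b_1(S_+)+b_1(S_-)$, using definiteness again. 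Hence
\[
\chi(S_+\cup S_-)=\chi(S_+)+\chi(S_-)-\chi(L\cup A)=(1-b_1(S_+))+(1-b_1(S_-))+c=2 .
\]
The regular neighborhood of $S_+\cup S_-$ is an $I$-bundle over a closed surface $S$ with $\chi(S)=2$, so $S$ contains a sphere; irreducibility of the link complement then forces $Y\cong S^3$ and $S\cong S^2$ all at once. This single computation delivers precisely what you hoped to extract from Donaldson's theorem plus Whitney's criterion: the plaquettes are forced to be disks tiling a $2$-sphere, the projection to $S$ is a genuine planar diagram with $c$ crossings, and Proposition \ref{p: diagram} (the equality case of $|b(D)-a(D)|\le c(D)$) shows the diagram is alternating. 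Without this step your proposal does not constitute a proof.
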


The characterization given in Theorem \ref{t: alt definite} is compelling in that it leads to new, conceptual proofs of one of Tait's conjectures (c.~1876), amongst other applications.
\begin{thm}
\label{t: tait}
Any two connected, reduced, alternating diagrams of the same link have the same crossing number and writhe.
\end{thm}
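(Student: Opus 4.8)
The plan is to read off from any connected reduced alternating diagram $D$ of $L$ two identities expressing its crossing number $c(D)$ and writhe $w(D)$ in terms of intrinsic data of $L$ and of the two chessboard surfaces, and then to argue that the surface‑dependent quantities are actually invariants of $L$. Write $B$, $W$ for the black and white chessboard surfaces of $D$, labelled so that $\G_B$ is negative definite and $\G_W$ positive definite (legitimate, since $D$ is reduced alternating). The underlying $4$‑valent plane graph of $D$ is connected with $c$ vertices and $c+2$ faces, split into $\beta$ black and $\omega$ white ones, so $\beta+\omega=c+2$; as $B$ (resp.\ $W$) is a connected regular neighbourhood of the black (resp.\ white) faces together with the crossing bands, one gets $b_1(B)=1-\beta+c$ and $b_1(W)=1-\omega+c$, hence
\begin{equation}\label{eq:tait-c}
c(D)=b_1(B)+b_1(W)=\rk\G_B+\rk\G_W .
\end{equation}
Applying the Gordon--Litherland formula $\sigma(L)=\sigma(\G_F)-\mu(F)$ to $F=B$ and $F=W$ and adding, and using that the two colourings of a diagram interchange crossing types, so that $\mu(B)+\mu(W)$ is the signed count of all crossings $=w(D)$, gives $w(D)=\sigma(\G_B)+\sigma(\G_W)-2\sigma(L)$; since $\G_B,\G_W$ are definite of opposite signs this becomes
\begin{equation}\label{eq:tait-w}
w(D)=b_1(W)-b_1(B)-2\sigma(L).
\end{equation}

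By \eqref{eq:tait-c} and \eqref{eq:tait-w}, both $c(D)$ and $w(D)$ are determined by the Betti numbers $b_1(B),b_1(W)$ together with the link invariant $\sigma(L)$. So the theorem reduces to showing that if $D_1,D_2$ are connected reduced alternating diagrams of $L$, their negative definite chessboard surfaces have the same first Betti number, and likewise the positive definite ones. Each such chessboard surface is a negative (resp.\ positive) definite spanning surface for $L$, and — by the standard fact that chessboard surfaces of connected reduced alternating diagrams are essential — it is essential. Hence it suffices to prove that $L$ bounds a unique essential negative definite spanning surface, and a unique essential positive definite one, up to isotopy rel boundary.

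To prove this uniqueness, let $F$ be any essential negative definite spanning surface of $L$, and fix a connected reduced alternating diagram $D_0$ with chessboard surfaces $(B_0,W_0)$, $\G_{B_0}$ negative definite. Since $L$ is a non‑split alternating link in $S^3$ (so its complement is irreducible) bounding the negative definite $F$ and the positive definite $W_0$, Theorem~\ref{t: alt definite} applies to the pair $(F,W_0)$ and realises $F$, up to isotopy rel boundary, as a chessboard surface of a necessarily reduced alternating diagram of $L$. Pass now to the double branched cover $\Sigma=\Sigma(L)$, a rational homology sphere: pushing $F$ and $B_0$ into $B^4$ and taking branched double covers yields negative definite fillings $X_F,X_{B_0}$ of $\Sigma$ with intersection lattices $\G_F,\G_{B_0}$, both \emph{sharp} because the underlying diagrams are reduced alternating. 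The rigidity of sharp negative definite fillings of $\Sigma$ — the lattice‑embedding calculus (Donaldson's theorem together with the Heegaard Floer $d$‑invariants) underlying Theorem~\ref{t: alt definite} — forces $\G_F\cong\G_{B_0}$, so $b_1(F)=\rk\G_F=\rk\G_{B_0}=b_1(B_0)$; the positive definite case follows by mirroring, completing the proof.

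The main obstacle is this last step: establishing that for alternating $L$ the negative definite lattice sharply bounding $\Sigma(L)$ is unique up to isomorphism, and in particular that ``reduced'' removes any $\langle -1\rangle$ orthogonal summand — this is exactly where the technology behind Theorem~\ref{t: alt definite} is required, whereas the first two paragraphs are bookkeeping with Euler characteristics and the Gordon--Litherland formula. One should additionally note that a connected reduced alternating diagram represents a non‑split link with irreducible complement (so no splitting enters), and fix orientation conventions for links so that $\sigma(L)$ and $w(D)$ are compared consistently for $D_1$ and $D_2$.
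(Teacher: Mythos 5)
Your first two paragraphs are sound and run parallel to the paper: the identities $c(D)=b_1(B)+b_1(W)$ and $w(D)=b_1(W)-b_1(B)-2\sigma(L)$ are exactly the Euler-characteristic and Gordon--Litherland bookkeeping the paper also uses, and they correctly reduce the theorem to showing that $b_1$ of the negative-definite chessboard surface (and likewise the positive-definite one) is an invariant of $L$. The gap is in your final step. You assert that the essential negative definite spanning surface of $L$ is unique up to isotopy rel boundary, and you propose to prove it by passing to the double branched cover and invoking a rigidity statement for \emph{sharp} definite fillings via Donaldson's theorem and $d$-invariants, which you describe as ``the technology behind Theorem~\ref{t: alt definite}.'' It is not: Theorem~\ref{t: alt definite} is proved in this paper by classical $3$-dimensional cut-and-paste topology (minimal position, Lemma~\ref{l: intersect}, an Euler characteristic count), and no gauge theory or Floer homology appears anywhere. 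More importantly, the rigidity you need --- that any two sharp negative definite fillings of $\Sigma(L)$ have isomorphic (or even equal-rank) intersection lattices, with ``reduced'' excising $\langle-1\rangle$ summands --- is not a citable theorem and is not established here; nor is sharpness of these fillings. Note also that Proposition~\ref{p: min complexity} only pins down $b_1$ among surfaces \emph{with the same Euler number}, so two negative definite spanning surfaces of $L$ could a priori have different $b_1$; this is precisely the difficulty your argument must overcome and does not.

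The paper's resolution is different and worth noting. Given diagrams $D$ and $D''$, it applies Theorem~\ref{t: alt definite} to the \emph{mixed} pair consisting of the white surface $W$ of $D$ and the black surface $B''$ of $D''$, producing an intermediate reduced alternating diagram $D'$ with $W'\simeq W$ and $B'\simeq B''$. Knowing $W\simeq W'$ gives an isometry of Gordon--Litherland pairings, hence of flow lattices $F(G)\approx F(G')$ of the Tait graphs (Theorem~\ref{t: flows}); the crossing number is then recovered from the flow lattice alone by the combinatorial rigidity result Theorem~\ref{t: torelli} (isometric flow lattices of bridgeless planar graphs have equally many edges), proved via the face-boundary cycles and the characterization of irreducible and simple lattice elements as cycles and Eulerian subgraphs. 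That combinatorial step is what replaces the uniqueness claim you are missing; without Theorem~\ref{t: torelli} or an equivalent, your argument does not close.
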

\noindent
A diagram is reduced if every crossing touches four distinct regions.
Theorem \ref{t: tait} was originally proven independently by Kauffman, Murasugi, and Thistlethwaite using properties of the Jones polynomial, shortly following its discovery \cite{kauff:altknots, murasugi:altknots, thistle:spanning}.  By contrast, the proof we give is based on more classical topological constructions and some basic facts about flows on planar graphs.

A connected, oriented alternating diagram is special if one of the associated spanning surfaces is orientable.  Seifert's algorithm outputs this surface when applied to such a diagram.  An oriented alternating link is special if it has a special alternating diagram.  Theorem \ref{t: alt definite} has the following straightforward consequence, first established by Banks and Hirasawa-Sakuma using geometric methods \cite{banks:special,hirasawasakuma}.

\begin{cor}
\label{c: seifert}
A Seifert surface for a special alternating link $L$ has minimum genus if and only if it is obtained by applying Seifert's algorithm to a special alternating diagram of $L$.
\end{cor}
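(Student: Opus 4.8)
The plan is to deduce the corollary from Theorem~\ref{t: alt definite} together with one classical fact: for an orientable spanning surface $F$ of $L$ the Gordon--Litherland pairing $\G_F$ is, up to a fixed overall sign, the symmetrized Seifert form of $L$, so $\mathrm{sig}(\G_F)$ does not depend on the choice of orientable $F$ (it equals $\pm$ the link signature), and it lives on $H_1(F)\cong\bZ^{\,2g(F)+\mu-1}$, where $\mu$ is the number of components of $L$. In particular $|\mathrm{sig}(\G_F)|\le 2g(F)+\mu-1$ for every Seifert surface $F$, with equality exactly when $\G_F$ is definite. The ``if'' direction then falls out immediately: if $F$ is the surface obtained from a special alternating diagram $D$ of $L$ by Seifert's algorithm, then $F$ is the orientable one of the two chessboard surfaces of $D$, and since $D$ is an alternating diagram of the non-split link $L$ this surface is definite by the property recalled in the introduction; hence for every Seifert surface $F'$ of $L$ one gets $2g(F)+\mu-1=|\mathrm{sig}(\G_F)|=|\mathrm{sig}(\G_{F'})|\le 2g(F')+\mu-1$, so $g(F)\le g(F')$ and $F$ has minimum genus.

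For the ``only if'' direction I would first fix a special alternating diagram $D_0$ of $L$, and --- after replacing $L$ by its mirror image if necessary --- arrange that the orientable chessboard surface $F_0$ of $D_0$ is positive definite while the other chessboard surface $B_0$ is negative definite. By the ``if'' direction $F_0$ realizes the minimum genus of $L$, so if $F$ is any minimum-genus Seifert surface for $L$ then $g(F)=g(F_0)$, and since $F$ and $F_0$ are connected spanning surfaces for the same link they have equal first Betti number, $\rk H_1(F)=\rk H_1(F_0)$. Since $\mathrm{sig}(\G_F)=\mathrm{sig}(\G_{F_0})$ and, by positive definiteness of $F_0$, $\mathrm{sig}(\G_{F_0})=\rk H_1(F_0)$, we get $\mathrm{sig}(\G_F)=\rk H_1(F)$; a symmetric bilinear form on a free abelian group of rank $r$ whose signature equals $r$ is positive definite, so $F$ is itself a positive definite surface.

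Now I would invoke Theorem~\ref{t: alt definite}: $L$ is a link in the $\bZ/2\bZ$ homology sphere $S^3$, it is non-split --- hence has irreducible complement --- and it bounds the positive definite surface $F$ together with the negative definite surface $B_0$, so the theorem produces an alternating diagram $D'$ of $L$ whose two chessboard surfaces are isotopic rel boundary to $F$ and to $B_0$. Both of these are connected, so $D'$ is connected; the chessboard surface isotopic to $F$ is orientable, because $F$ is; hence $D'$ is a special alternating diagram, and the surface produced from $D'$ by Seifert's algorithm is that orientable chessboard surface (as noted in the introduction), which is isotopic rel boundary to $F$. This exhibits $F$, up to isotopy rel boundary, as a surface obtained by Seifert's algorithm from a special alternating diagram, which is the implication we needed.

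The only step that is not purely formal is the implication ``minimum genus $\Rightarrow$ positive definite'' in the second paragraph, and what makes it go through is the numerical normalization $\rk H_1(F_0)=|\mathrm{sig}(\G_{F_0})|$ supplied by the ``if'' direction, so the two halves of the statement really are proved in tandem. Everything else is bookkeeping that I would need to check: that a special alternating link has irreducible complement (immediate from the connectedness of a special alternating diagram, via Menasco's results on alternating links); that one may take every Seifert surface in sight connected, so that the diagram $D'$ coming out of Theorem~\ref{t: alt definite} is connected and hence genuinely a \emph{special} alternating diagram; and that passing to the mirror image carries special alternating diagrams, the surfaces obtained from them by Seifert's algorithm, and minimum-genus Seifert surfaces to the corresponding objects for the mirror link, so the normalization ``$F_0$ positive definite'' costs nothing.
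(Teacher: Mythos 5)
Your proposal is correct and follows essentially the same route as the paper: your signature argument showing that a minimum-genus Seifert surface must itself be definite is precisely the content (and proof) of Proposition~\ref{p: min complexity} specialized to surfaces with $e=0$, and the final step --- feeding the definite Seifert surface together with the opposite-sign chessboard surface of a fixed special alternating diagram into Theorem~\ref{t: alt definite} --- is exactly what the paper does. The extra bookkeeping you flag (irreducibility of the complement, connectedness, mirroring to normalize signs) is handled implicitly in the paper and does not change the argument.
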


At the time of a colloquium about these results at the University of Texas, Austin in January 2015, Tye Lidman observed the following immediate Corollary to Theorem \ref{t: alt definite}.

\begin{cor}
\label{c: tye}
An amphichiral knot with a definite spanning surface is alternating. \qed
\end{cor}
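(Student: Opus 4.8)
The plan is to deduce this immediately from Theorem~\ref{t: alt definite} together with the fact that reflecting the ambient manifold reverses the sign of the pairing. Let $K$ be an amphichiral knot in $S^3$ that bounds a definite spanning surface $F$. After replacing $F$ by its mirror image if necessary, I may assume that the pairing associated to $F$ on $H_1(F)$ is negative definite. Reflecting $S^3$ negates all linking numbers, hence negates this pairing, so the mirror surface $\overline F$ is a \emph{positive} definite spanning surface for the mirror knot $\overline K$. Since $K$ is amphichiral, some self-diffeomorphism of $S^3$ carries $\overline K$ to $K$, and the image of $\overline F$ under it is then a positive definite spanning surface for $K$ itself. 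Hence $K$ simultaneously bounds a negative definite surface and a positive definite surface.

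It remains to observe that the hypotheses of Theorem~\ref{t: alt definite} are met: $S^3$ is a $\bZ/2\bZ$ homology sphere, and the complement of any knot in $S^3$ is irreducible, since a $2$-sphere in $S^3 \setminus K$ bounds a ball in $S^3$ and $K$, being connected, lies on one side of that sphere, so the ball on the other side lies in the complement. Theorem~\ref{t: alt definite} then shows that $K$ is a non-split alternating link in $S^3$; a knot is automatically non-split, so $K$ is an alternating knot (and, as a bonus, the two given definite surfaces are the chessboard surfaces of an alternating diagram of $K$).

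The only point requiring any care is the behavior of the pairing under reflection, and this is immediate from its construction, which augments the Gordon--Litherland form only by correction terms built from linking numbers in the ambient homology sphere, each of which changes sign under an orientation-reversing diffeomorphism. I therefore expect no genuine obstacle here; the corollary is a formal consequence of Theorem~\ref{t: alt definite}, which is precisely why it can be recorded with a \qed.
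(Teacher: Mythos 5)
Your argument is correct and is exactly the intended one (the paper records the corollary with no proof precisely because it is this formal consequence of Theorem~\ref{t: alt definite}): mirror the given definite surface to reverse the sign of the Gordon--Litherland pairing, use amphichirality to pull the mirrored surface back to a spanning surface for $K$ itself, note irreducibility of the knot complement, and apply the theorem. The only cosmetic quibble is your closing sentence about ``correction terms'': the pairing is defined directly by linking numbers, so it simply changes sign under reflection --- there is nothing to augment.
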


\noindent
Following a conference talk about these results at Princeton University in June 2015, Andr\'as Juh\'asz and Marc Lackenby applied Theorem \ref{t: alt definite} to the algorithmic detection of prime alternating knots.  With their gracious permission, we include their result and proof.

\begin{thm}
\label{t: algorithm}
Given a diagram of a prime knot $K$ with $c$ crossings, there exists an $\exp(c^2)$ time algorithm to decide whether $K$ is alternating.
\end{thm}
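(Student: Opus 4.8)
The plan is to exploit the characterization provided by Theorem~\ref{t: alt definite}. Since $K$ is prime, its complement is irreducible, so that theorem (together with the chessboard construction recalled in the introduction) says that $K$ is alternating \emph{if and only if} it bounds both a positive definite spanning surface and a negative definite spanning surface. The algorithm will therefore search for such a pair of surfaces and accept if and only if it finds one; the whole task is to confine the search to a family of surfaces that is finite, effectively enumerable, and of size at most $\exp(c^2)$.

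The first step is to bound the complexity of the surfaces that need to be examined. If $K$ is alternating, then the given diagram shows that its crossing number is at most $c$, so by Tait's theorem (Theorem~\ref{t: tait}) a reduced alternating diagram of $K$ has some $n \le c$ crossings; its two chessboard surfaces $F_{+}, F_{-}$ are definite with $b_1(F_{+}) + b_1(F_{-}) = n \le c$, and, the diagram being reduced and $K$ prime, they are essential in the exterior of $K$. Conversely, if $K$ bounds any positive definite and negative definite spanning surfaces at all, then by Theorem~\ref{t: alt definite} it is alternating and hence bounds such a pair of essential surfaces with $b_1 \le c$. Thus it suffices to search for an essential positive definite spanning surface and an essential negative definite spanning surface, each with $b_1 \le c$, i.e.\ with Euler characteristic at least $1 - c$.

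The second step is normal surface theory. From the given diagram one builds a triangulation $\mathcal{T}$ of $S^3$ with $O(c)$ tetrahedra in which $K$ lies in the $1$-skeleton. By the weight estimate of Hass, Lagarias, and Pippenger for normal representatives of incompressible surfaces --- in the version valid for properly embedded surfaces with boundary in the $1$-skeleton, the one-sided case being reduced to the two-sided one by passing to the boundary of a regular neighborhood --- every essential spanning surface of $K$ is isotopic rel boundary to a normal surface in $\mathcal{T}$ of weight at most $2^{O(c)}$. A normal surface in $\mathcal{T}$ is recorded by a vector of $7\cdot O(c)$ nonnegative integers, each no larger than the weight of the surface, so the normal surfaces of weight at most $2^{O(c)}$ number at most $(2^{O(c)})^{O(c)} = \exp(O(c^2))$ and can be listed in $\exp(O(c^2))$ time. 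For each surface $F$ on the list, in time polynomial in $c$ one tests whether $F$ is a connected spanning surface for $K$ with $\chi(F)\ge 1-c$, assembles a presentation of $H_1(F;\bZ)\cong\bZ^{b_1(F)}$ together with the Gordon--Litherland pairing $\mathcal{G}_F$ (read off from the normal data via linking numbers in the triangulated homology sphere), and decides by integer linear algebra whether $\mathcal{G}_F$ is positive definite or negative definite. The algorithm answers ``alternating'' precisely when a positive definite and a negative definite surface both occur on the list; correctness follows from the characterization of the first step, and the running time is $\exp(O(c^2))$, dominated by the enumeration.

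I expect the main obstacle to be the interface between the topological characterization and the normal-surface bookkeeping, namely establishing that the definite surfaces one must search over are essential and hence carried, up to isotopy rel boundary, by a normal surface of weight $2^{O(c)}$: this uses both that the chessboard surfaces of a reduced alternating diagram of a prime knot are essential and a Hass--Lagarias--Pippenger weight bound valid for spanning surfaces. Once that bound is in place the remaining points are routine --- producing a triangulation with $O(c)$ tetrahedra from the diagram, enumerating the normal surfaces of bounded weight, and extracting the Gordon--Litherland form and testing its definiteness --- and the exponent $c^2$ arises simply because there are about $(2^{O(c)})^{O(c)}$ normal surfaces of the relevant weight in a triangulation with $O(c)$ tetrahedra.
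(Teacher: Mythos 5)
Your overall architecture is the same as the paper's: reduce to the existence of a positive and a negative definite spanning surface via Theorem~\ref{t: alt definite}, bound their first Betti numbers by $c$ using a reduced alternating diagram, realise them as normal surfaces of exponentially bounded size in an $O(c)$-tetrahedron triangulation, and enumerate-and-test, with the $\exp(c^2)$ count coming from $(2^{O(c)})^{O(c)}$. You also correctly identified where the difficulty lies. But your resolution of that difficulty has a genuine gap. The statement that \emph{every} essential spanning surface with $\chi \ge 1-c$ is isotopic rel boundary to a normal surface of weight $2^{O(c)}$ is not what Hass--Lagarias(--Pippenger) prove, and it is false in general: their exponential bounds apply to \emph{fundamental} (or vertex) solutions of the matching equations, whereas an arbitrary incompressible surface is only known to be a Haken sum of fundamental surfaces, and without a bound on the number of summands there is no weight bound. (Indeed a knot exterior can contain infinitely many isotopy classes of essential surfaces of a fixed Euler characteristic, all but finitely many of large weight.) What one actually needs, and what the paper's Lemma~\ref{l: normal} supplies, is that the specific surfaces being sought are sums of at most $c$ fundamental surfaces. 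This comes from Matveev's finiteness theorem for incompressible, boundary-incompressible surfaces of bounded $p$-complexity in a \emph{simple} pair $(X,\Gamma)$, where $\Gamma$ is a meridian in the $1$-skeleton; verifying simplicity is where Menasco's theorem (no essential tori, and essential annuli must meet the meridian) and the primality of $K$ enter. Your proposal never establishes any such finiteness or summand bound, so the enumeration is not justified.

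A second, smaller issue is your treatment of one-sidedness. The chessboard surfaces are in general non-orientable and one-sided, and passing to the boundary of a regular neighborhood does not reduce to the two-sided case here: the resulting surface is a double cover of $S_i$ whose boundary wraps twice around a longitude, so it is not a spanning surface, and a weight bound for it does not translate back into a weight bound for $S_i$ or into control of its Gordon--Litherland form. The paper instead observes that the \emph{proof} of Matveev's theorem applies directly to the possibly one-sided $S_i$ and yields the decomposition $S_i = \sum \lambda_j F_j$ into at most $c$ fundamental pieces, after which the Hass--Lagarias bound on fundamental surfaces gives the exponential weight estimate. The remainder of your algorithm (enumeration, extracting $H_1$ and the pairing, testing definiteness) matches the paper and is fine once the weight bound is in place.
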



\noindent
{\bf Organization.}
Section \ref{ss: gl} reviews the work of Gordon and Litherland on their eponymous pairing and its applications to link signatures, and then points out how their definition and results generalize to the case of a $\bZ / 2 \bZ$ homology sphere.  Section \ref{ss: definite} defines definite surfaces and collects their basic properties.  Section \ref{ss: characterization} applies this preparatory material in order to prove Theorem \ref{t: alt definite} and deduce Corollary \ref{c: seifert}.  Section \ref{s: tait} develops the elementary theory of flows on planar graphs in order to deduce Theorem \ref{t: tait} from Theorem \ref{t: alt definite}.  Finally, the Appendix, written by Juh\'asz and Lackenby, contains the proof of Theorem \ref{t: algorithm}.

\noindent
{\bf Convention.}
We use integer coefficients for all chain groups and homology groups of graphs and surfaces.


\subsection*{Acknowledgments.} My foremost thanks go to Andr\'as Juh\'asz,  Marc Lackenby, and Tye Lidman for their valuable contributions to this paper.  
Thanks to the pair of Paolo Lisca and Brendan Owens, and also to Yi Ni, who independently suggested that Theorem \ref{t: alt definite} should hold for a broader class of 3-manifolds than integer homology spheres, for which it was initially proven.  Thanks lastly to John Baldwin, Peter Feller, John Luecke, and Morwen Thistlethwaite for many enjoyable and stimulating discussions.  This work was supported by NSF CAREER Award DMS-1455132 and an Alfred P. Sloan Research Fellowship.


\section{The Gordon-Litherland pairing.}
\label{ss: gl}

Generalizing earlier work by several researchers \cite{goeritz,kauffmantaylor,seifert,trotter}, Gordon and Litherland defined a symmetric bilinear pairing on the ordinary first homology group of a compact embedded surface in $S^3$ \cite{gl:sig}.  We recall their definition and their main results, and then we promote their work to the setting of a $\bZ / 2 \bZ$ homology sphere.

Let $Y = S^3$ and $S \subset Y$ a compact, connected, embedded surface.  The unit normal bundle to $S$ embeds as a subspace $N(S) \subset Y \setminus S$ and carries a 2-to-1 covering map
\[
p_S: N(S) \to S.
\]
Given pair of homology classes $a,b \in H_1(S)$, represent them by embedded, oriented multi-curves $\alpha, \beta \subset S$. Define
\[
\langle a,b \rangle_S = \lk(\alpha, p_S^{-1}(\beta)),
\]  
where $\lk$ denotes the linking number.
Gordon and Litherland prove that the pairing $\langle \, , \rangle_S$ establishes a well-defined, symmetric, bilinear pairing
\[
\langle \, , \rangle_S : H_1(S) \times H_1(S) \to \bZ
\]
\cite[Theorem 3 and Proposition 9]{gl:sig}.  When $S$ is orientable, the pairing coincides with the symmetrized Seifert pairing.

They also show how to use the pairing $\langle \, , \rangle_S$ to determine the signature of a link.  Suppose that $S$ is a spanning surface for a link $L$, meaning that $L = \del S$.  The components $K_1, \dots, K_m$ of $L$ define projective homology classes $[K_1],\dots,[K_m] \in H_1(S)/\pm$.  For a projective class $x \in H_1(S)/\pm$, let $|x|_S = \langle x,x \rangle_S$ denote its well-defined self-pairing.  The value $\frac 12 |[K_i]|_S$ equals the framing that $S$ induces on $K_i$.  Let $e(S)$ denote the {\em euler number} $-\frac 12 \sum_{i=1}^m |[K_i]|_S$.  If $L$ is oriented, then let $e(S,L) = -\frac 12 |[L]|_S$.  The two quantities are related by the identity $e(S,L) = e(S) - \mathrm{lk}(L)$, where $\mathrm{lk}(L)$ denotes the total linking number $\sum_{i < j} \lk(K_i,K_j)$.  Lastly, let $\sigma(S)$ denote the signature of the pairing $\langle \, , \rangle_S$.

Gordon and Litherland's result reads as follows in the case that $Y = S^3$ \cite[Corollaries $5'$ and $5''$]{gl:sig}.  As we discuss below, it pertains more generally to the case of $\bZ / 2 \bZ$ homology sphere $Y$.

\begin{thm}
\label{t: gl}
If $S$ is a compact spanning surface for an unoriented link $L \subset Y$, then the quantity
\[
\sigma(S) + \frac 12 e(S)
\]
depends only on $L$, and it coincides with the Murasugi invariant $\xi(L)$ when $Y = S^3$.  If $L$ is oriented, then 
\[
\sigma(S) + \frac 12 e(S,L)
\]
depends only on $L$, and it coincides with the link signature $\sigma(L)$ when $Y = S^3$.
\qed
\end{thm}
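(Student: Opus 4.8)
The plan is to decouple two assertions: first, that $\sigma(S)+\tfrac12 e(S)$ and $\sigma(S)+\tfrac12 e(S,L)$ depend only on $L$, for $L$ in an arbitrary $\bZ/2\bZ$ homology sphere $Y$; and second, that when $Y=S^3$ these common values are the Murasugi invariant $\xi(L)$ and the link signature $\sigma(L)$.  The second assertion is exactly Gordon and Litherland's theorem \cite[Corollaries $5'$ and $5''$]{gl:sig}: evaluate $\sigma(S)+\tfrac12 e(S,L)$ on a Seifert surface, where $\langle\,,\rangle_S$ is the symmetrized Seifert form and $e(S,L)=0$, to obtain $\sigma(L)$; and evaluate $\sigma(S)+\tfrac12 e(S)$ on a checkerboard surface, where $\langle\,,\rangle_S$ is the Goeritz form with its Gordon--Litherland correction, to obtain $\xi(L)$.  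So only the first assertion requires new input, and the only features of $Y$ it uses are that $H_1(Y;\bZ/2\bZ)=0$ and, consequently, that $Y$ is a rational homology sphere --- which is what makes linking numbers, and hence $\langle\,,\rangle_S$, $\sigma(S)$, $e(S)$ and $e(S,L)$, well defined.  I take for granted that the definitions of the preceding paragraphs carry over to such $Y$ by the same arguments.

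For the first assertion I would pass to dimension four.  Given a spanning surface $S$ for $L$, push the interior of $S$ into $Y\times(0,1]$ to obtain a properly embedded $\hat S\subset Y\times[0,1]$ with $\partial\hat S=L\subset Y\times\{1\}$ and $\hat S$ disjoint from $Y\times\{0\}$, and let $W(S)\to Y\times[0,1]$ be the double cover branched over $\hat S$.  Because $H_1(Y;\bZ/2\bZ)=0$, this cover exists (the mod $2$ intersection pairing with $\hat S$ supplies the covering data) and is unique ($H^1(Y\times[0,1];\bZ/2\bZ)=0$); it is trivial over $Y\times\{0\}$ and restricts to the canonical branched double cover $\Sigma:=\Sigma_2(Y,L)$ over $Y\times\{1\}$, so $\partial W(S)=(Y\sqcup Y)\sqcup\Sigma$ is independent of $S$.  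A standard computation with the double cover, exactly as over $S^3$, identifies $H_2(W(S);\bR)$ with $H_1(S;\bR)$ and the intersection form of $W(S)$ with $\langle\,,\rangle_S$; in particular $\sigma(W(S))=\sigma(S)$.

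Now take two spanning surfaces $S_0,S_1$ for $L$ and form the closed embedded surface $F=\hat S_0\cup_L\overline{\hat S_1}$ in $Y\times[0,2]$, placing $\hat S_0$ in $Y\times[0,1]$ and a pushed-in copy of $S_1$ in $Y\times[1,2]$.  Since $H_2(Y\times[0,2];\bZ/2\bZ)=H_2(Y;\bZ/2\bZ)=0$, the surface $F$ is $\bZ/2\bZ$-nullhomologous, so it admits a double branched cover $V$, and by construction $V$ restricts over $Y\times[0,1]$ and $Y\times[1,2]$ to $W(S_0)$ and $\overline{W(S_1)}$, glued along $\Sigma$.  Novikov additivity gives $\sigma(V)=\sigma(W(S_0))-\sigma(W(S_1))=\sigma(S_0)-\sigma(S_1)$.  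On the other hand, the signature formula for a double branched cover over a closed surface gives $\sigma(V)=2\,\sigma(Y\times[0,2])-\tfrac12\,e(F)=-\tfrac12\,e(F)$, where $e(F)$ is the normal Euler number and $\sigma(Y\times[0,2])=0$ since $H_2(Y\times[0,2];\bR)=0$.  Additivity of normal Euler numbers under the gluing gives $e(F)=e(S_0)-e(S_1)$, whence $\sigma(S_0)+\tfrac12 e(S_0)=\sigma(S_1)+\tfrac12 e(S_1)$.  The oriented statement is identical: replacing $e(\cdot)$ by $e(\cdot,L)$ and using $e(S,L)=e(S)-\lk(L)$ with $\lk(L)$ independent of $S$, the two differences coincide, so $\sigma(S)+\tfrac12 e(S,L)$ is likewise independent of $S$.

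The real work is in making the two four-dimensional inputs rigorous at this level of generality: the existence and uniqueness of the branched covers --- which is exactly where $H_*(Y;\bZ/2\bZ)=0$ enters --- and, more substantially, the signature formula $\sigma(V)=2\,\sigma(X)-\tfrac12\,e(F)$ for a double branched cover over a closed surface $F$ that is merely $\bZ/2\bZ$-nullhomologous, hence possibly nonorientable, so that $e(F)$ must be read as a twisted normal Euler number and the formula deduced from the $G$-signature theorem rather than quoted in its familiar integral form; the identification of the intersection form of $W(S)$ with $\langle\,,\rangle_S$ over a general $Y$ also needs to be checked.  As a sanity check, the signs in these formulas are exactly what force the invariant to be $\sigma(S)+\tfrac12 e(S)$ and not $\sigma(S)-\tfrac12 e(S)$.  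An elementary alternative that avoids dimension four is to show that any two spanning surfaces for $L$ in $Y$ are related by ambient isotopy together with additions and deletions of unknotted tubes and half-twisted bands --- a tube adds a hyperbolic summand to $\langle\,,\rangle_S$ and fixes $e(S)$, while a half-twisted band changes $\sigma(S)$ by $\pm1$ and $e(S)$ by $\mp2$, so $\sigma(S)+\tfrac12 e(S)$ is invariant under each move; there the obstacle becomes the structural statement about spanning surfaces in $Y$, which is classical over $S^3$ and should survive the same cut-and-paste arguments.
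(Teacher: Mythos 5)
Your proposal is correct in outline but follows a genuinely different route from the paper, which gives no new proof at all: Theorem \ref{t: gl} is stated with a \qed, and the surrounding text simply explains that Gordon and Litherland's original argument from \cite[Section 6]{gl:sig} goes through verbatim over a $\bZ/2\bZ$ homology sphere. That argument is precisely your ``elementary alternative'': any two spanning surfaces are $S^*$-equivalent (related by isotopy, tube additions, and half-twisted band additions), each move preserves $\sigma(S)+\tfrac12 e(S)$, and evaluation on a Seifert surface identifies the invariant with $\sigma(L)$. The ``obstacle'' you flag --- whether the $S^*$-equivalence statement survives in $Y$ --- is exactly the one point the paper addresses: Proof I of \cite[Theorem 11]{gl:sig} needs only that $V_0\cup V_1$ is a $(\mod 2)$ $2$-cycle, hence bounds $(\mod 2)$ $3$-chains $Y_0,Y_1$ with $Y=Y_0\cup Y_1$, which holds in any $\bZ/2\bZ$ homology sphere; everything else is a local computation with rational linking numbers. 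Your primary, four-dimensional route via double branched covers of $Y\times I$ is also viable and conceptually attractive (it explains the invariant as a signature defect), but as you candidly note it front-loads the hard analysis into the $G$-signature theorem for a branched cover over a closed, possibly nonorientable, merely $\bZ/2\bZ$-null-homologous surface in a manifold with boundary, plus the identification of the intersection form of $W(S)$ with $\langle\,,\rangle_S$ over general $Y$; none of that is needed for the paper's purposes, and the three-dimensional cut-and-paste argument buys the same conclusion with far less machinery. The reductions you do carry out correctly --- Novikov additivity along $\Sigma$, additivity of normal Euler numbers, and the passage from $e(\cdot)$ to $e(\cdot,L)$ via $e(S,L)=e(S)-\lk(L)$ --- so the gap is one of unproved inputs rather than of logic.
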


\noindent
The Murasugi invariant $\xi(L)$ is the average of the signatures of the different oriented links whose underlying unoriented link is $L$.
Note that if $S$ is a Seifert surface for an oriented link $L$, then $[L]=0 \in H_1(S)/\pm$ and $\langle \, , \rangle_S$ coincides with the symmetrized Seifert pairing.  We therefore recover the familiar definition of the link signature in this case.

Now we turn to the case in which $Y$ is an arbitrary $\bZ / 2 \bZ$ homology sphere.  The preceding summary carries over to this setting, and we highlight the necessary alterations.

The key distinction is that a pair of disjoint, oriented curves $K_1, K_2 \subset Y$ have a {\em rational} linking number $\lk(K_1,K_2)$.  To describe it, orient $Y$ and take a rational Seifert surface $S_1$ that runs $q > 0$ times around $K_1$ and meets $K_2$ transversely.  Then set $\lk(K_1,K_2) = (S_1 \cdot K_2)/q$.  A standard argument shows that this value is independent of the choice of rational Seifert surface, it is symmetric in $K_1$ and $K_2$, and it extends by linearity to a $\bQ$-valued function on pairs of disjoint, oriented links in $Y$.

The Gordon-Litherland pairing in this setting is a pairing
\[
H_1(S) \times H_1(S) \to \bQ
\]
defined exactly as above with respect to the rational linking number.  The proof that it is well-defined and bilinear is straightforward, and the proof of \cite[Proposition 9]{gl:sig} applies directly to show that it is symmetric.  The remaining definitions that go into the statement of Theorem \ref{t: gl} also apply directly to this setting without change.  The proof of Theorem \ref{t: gl} given in \cite[Section 6]{gl:sig} applies as well, with two important notes.  First, the notion of $S^*$-equivalence of spanning surfaces carries over without change, as does the proof of \cite[Proposition 10]{gl:sig}, using the rational linking number.  Second, Proof I of \cite[Theorem 11]{gl:sig} only uses the fact that $S^3$ is a $\bZ / 2 \bZ$ homology sphere.  The salient point is that, in the notation of that proof, $V_0 \cup V_1$ is a $(\mod 2)$ 2-cycle, so there exist $(\mod 2)$ 3-chains $Y_0, Y_1 \subset S^3$ such that $S^3 = Y_0 \cup Y_1$ and $Y_0 \cap Y_1 = \del Y_0 = \del Y_1 = V_0 \cup V_1$.  This decomposition is used implicitly in the assertions made there about the subspaces $M$ and $M'$.  As the same holds for any $\bZ / 2 \bZ$ homology sphere $Y$, the proof adapts simply by substituting $Y$ for $S^3$.

We may take the invariant values $\xi(L)$ and $\sigma(L)$ appearing in Theorem \ref{t: gl} as the natural generalizations of the Murasugi invariant and the oriented link signature of a null-homologous link $L$ in a $\bZ / 2 \bZ$ homology sphere.  The link $L$ with a choice of orientation is null-homologous in this setting, since we assume it bounds a spanning surface $S$, which implies that $[L]=0 \in H_1(Y;\bZ/2\bZ)$, and so $[L] = 0 \in H_1(Y;\bZ)$.  We mention in closing that signatures of oriented links in rational homology spheres were studied in greater generality by Cha and Ko in \cite{chako2002}.


\section{Definite surfaces.}
\label{ss: definite}

A compact, connected surface $S$ in a $\bZ / 2 \bZ$ homology sphere is {\em definite} (either positive or negative) if its Gordon-Litherland pairing is.

\begin{prop}
\label{p: min complexity}
If $S$ is a definite surface with boundary $L$, then $b_1(S)$ is minimal over all spanning surfaces for $L$ with the same euler number as $S$.  Moreover, if $S'$ is such a surface with $b_1(S) = b_1(S')$, then $S'$ is definite and of the same sign as $S$.
\end{prop}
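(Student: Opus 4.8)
The plan is to derive everything from Theorem~\ref{t: gl} together with the elementary bound on the signature of a symmetric bilinear form. First I would record the linear-algebra input: for a symmetric bilinear form on a real (or rational) vector space of dimension $d$, Sylvester's law of inertia yields $p$ positive, $q$ negative, and $r$ zero terms in a diagonalization, with $p + q + r = d$, so the signature $p - q$ satisfies $|p - q| \le p + q \le d$, with equality $|p - q| = d$ precisely when $r = 0$ and one of $p, q$ vanishes, i.e.\ when the form is definite; moreover $p - q = d$ signals positive definiteness and $p - q = -d$ negative definiteness. Applied to the Gordon--Litherland pairing on $H_1(S)$, a group of rank $b_1(S)$, this says $|\sigma(S)| \le b_1(S)$, with equality if and only if $S$ is definite, the sign of $\sigma(S)$ recording the sign of definiteness.

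With this in hand, suppose first that $S$ is positive definite, so $\sigma(S) = b_1(S)$. Let $S'$ be any spanning surface for $L$ with $e(S') = e(S)$. Since $\sigma(S) + \frac{1}{2} e(S)$ depends only on $L$ by Theorem~\ref{t: gl}, we get $\sigma(S') = \sigma(S) = b_1(S)$. On the other hand $\sigma(S') \le b_1(S')$ by the bound above, so $b_1(S) \le b_1(S')$, which is the asserted minimality. If in addition $b_1(S') = b_1(S)$, then $\sigma(S') = b_1(S')$, and the equality case of the bound forces $S'$ to be positive definite. The case in which $S$ is negative definite is handled identically, replacing $\sigma(S) = b_1(S)$ by $\sigma(S) = -b_1(S)$ and reversing the inequalities, or alternatively by reversing the orientation of the ambient manifold, which negates all rational linking numbers and hence the Gordon--Litherland pairing.

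I do not foresee a genuine obstacle: the statement is a short consequence of Theorem~\ref{t: gl} and linear algebra once both are in place. The one point warranting a moment's attention is that the Gordon--Litherland pairing need not be nondegenerate a priori, so the linear-algebra lemma must be phrased so that the equality case $|\sigma| = b_1$ itself certifies nondegeneracy (and definiteness); this is exactly what the Sylvester normal form supplies, so no extra input about the pairing is needed.
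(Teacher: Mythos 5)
Your argument is correct and is essentially the paper's own proof: both rest on Theorem~\ref{t: gl} forcing all spanning surfaces with the same euler number to share the same signature, together with the elementary bound $|\sigma(S')| \le b_1(S')$ whose equality case certifies definiteness. You have merely spelled out the Sylvester-form details that the paper leaves implicit.
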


\begin{proof}
The Gordon-Litherland formula implies that all such surfaces have the same signature, whose absolute value therefore bounds from below the first betti number of any such surface.  By definition, this bound is attained by a definite surface.
\end{proof}

\begin{cor}
\label{c: incompressible}
A definite surface is incompressible. \qed
\end{cor}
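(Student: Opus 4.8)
The plan is to play a compressing disk off against the minimality in Proposition \ref{p: min complexity}: such a disk will either violate the definiteness of the Gordon--Litherland form outright, or else let us produce a competing spanning surface with the same boundary and Euler number but strictly smaller first Betti number. So let $S$ be a definite spanning surface for a link $L$ and suppose, for contradiction, that $D\subset Y$ is a compressing disk --- an embedded disk with $D\cap S=\del D\subset S\setminus L$ and $\del D$ not bounding a disk in $S$ --- and set $\gamma=\del D$.

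The crux is the observation that $\langle[\gamma],[\gamma]\rangle_S=0$. By definition this number equals $\lk(\gamma,p_S^{-1}(\gamma))$, and $p_S^{-1}(\gamma)$ is isotopic, within the complement of $\gamma$ in $Y$, to a multicurve carried by $S$ itself --- a push-off of $\gamma$ into $S$, taken once to each side when $\gamma$ is two-sided, and the boundary of a M\"obius-band neighborhood of $\gamma$ when $\gamma$ is one-sided. Any curve carried by $S$ is disjoint from the disk $D$, whose interior misses $S$ and whose boundary is $\gamma$, and so has linking number zero with $\gamma$; hence $\langle[\gamma],[\gamma]\rangle_S=0$.

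I now split on whether $[\gamma]$ vanishes in $H_1(S)$. If $[\gamma]\ne0$, then the Gordon--Litherland form of $S$ has a nonzero null vector and so is indefinite, contrary to hypothesis. If $[\gamma]=0$, then $\gamma$ separates $S$, say $S=A\cup_\gamma B$, and surgering $S$ along $D$ produces the disjoint union $\hat A\sqcup\hat B$ of the two capped-off pieces. Were $A$ and $B$ both to meet $L$, a Mayer--Vietoris computation would give $H_1(S)\cong H_1(\hat A)\oplus H_1(\hat B)$ --- the kernel of the evident surjection being generated by $[\gamma]=0$ --- and hence $b_1(S)=b_1(\hat A)+b_1(\hat B)$, whereas an Euler-characteristic count gives $b_1(\hat A)+b_1(\hat B)=b_1(S)-1$; so in fact all of $L$ lies in one piece, say $A$, and $\hat B$ is closed. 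It cannot be a $2$-sphere --- else $\gamma$ would bound a disk in $S$ --- so $\chi(\hat B)\le1$, whence the connected spanning surface $S':=\hat A$ for $L$ satisfies $\chi(S')=\chi(S)+2-\chi(\hat B)\ge\chi(S)+1$ and therefore $b_1(S')<b_1(S)$. Since the surgery was supported in the interior of $S$ away from $L$, the surface $S'$ induces the same framing on each component of $L$ as $S$ does, so $e(S')=e(S)$. This contradicts Proposition \ref{p: min complexity}.

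The one conceptually substantive step is the vanishing $\langle[\del D],[\del D]\rangle_S=0$ attached to any compressing disk; after that, the argument is a short dichotomy together with routine bookkeeping, the only point requiring a moment's care being that surgery along an interior disk leaves the Euler number untouched (immediate, as the modification is localized away from $\del S$). I expect no genuine obstacle --- only the minor technical decision to organize the $[\gamma]=0$ case around the homology class of $\del D$ rather than around whether the surgery disconnects $S$, which dispenses uniformly with non-orientable pieces (for instance, $\del D$ might bound a M\"obius band in $S$).
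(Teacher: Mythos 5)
Your proof is correct and follows the route the paper intends: the corollary is stated with no argument because it is meant to follow at once from Proposition \ref{p: min complexity}, by compressing and contradicting the minimality of $b_1$ among spanning surfaces with the same euler number. You supply the details honestly, and in particular your observation that $\langle[\del D],[\del D]\rangle_S=0$ neatly disposes of the one genuinely awkward case (where $\del D$ separates $S$ into two pieces both meeting $L$, so that surgery would not produce a competing connected spanning surface) by exhibiting a nonzero isotropic vector instead.
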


\noindent
Following Proposition \ref{p: diagram} below, Corollary \ref{c: incompressible} generalizes \cite[Prop 2.3]{mt:tait}, which treats the case of a chessboard surface associated with a reduced alternating diagram of a link in $S^3$.

\begin{lem}
\label{l: subsurface}
If $S$ is definite and $S' \subset S$ is a compact subsurface with connected boundary, then $S'$ is definite.
\end{lem}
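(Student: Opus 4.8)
The plan is to reduce everything to a statement about the restriction of the Gordon-Litherland pairing to the image of $H_1(S')$ in $H_1(S)$. First I would observe that since $S'\subset S$ is an embedded subsurface, the inclusion $\iota\colon S'\into S$ induces a homomorphism $\iota_*\colon H_1(S')\to H_1(S)$, and that the Gordon-Litherland pairing is natural under such inclusions: for $a,b\in H_1(S')$ one has $\langle a,b\rangle_{S'}=\langle \iota_*a,\iota_*b\rangle_S$, because a curve $\beta$ in $S'$ and its normal pushoff can be computed equally well inside $S$ (the unit normal bundle $N(S')$ sits inside $N(S)$, and the linking number is computed in the ambient $Y$ either way). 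Consequently the form $\langle\,,\rangle_{S'}$ is the pullback along $\iota_*$ of the form $\langle\,,\rangle_S$. Since $\langle\,,\rangle_S$ is definite of a given sign, any pullback of it is at worst semidefinite of that same sign; so the only thing left to rule out is a nonzero radical, i.e.\ I must show $\iota_*$ is injective.

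The crux is therefore the claim that $\iota_*\colon H_1(S')\to H_1(S)$ is injective when $S'$ has connected boundary. This is where the connected-boundary hypothesis is essential. I would argue as follows: let $c=\del S'$, a single circle in the interior of $S$ (or on $\del S$; either way a connected curve), which separates $S$ into $S'$ and its complementary subsurface $S''=\overline{S\setminus S'}$, glued along $c$. Apply the Mayer-Vietoris sequence to $S=S'\cup_c S''$. The relevant piece is $H_1(c)\to H_1(S')\oplus H_1(S'')\to H_1(S)$, and since $c\cong S^1$ the map $H_1(c)=\bZ\to H_1(S')\oplus H_1(S'')$ sends the generator $[c]$ to $([\del S'],-[\del S''])$. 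Injectivity of $\iota_*$ on $H_1(S')$ then reduces to showing that the only elements of $H_1(S')$ killed in $H_1(S)$ come from $H_1(c)$ via this map, i.e.\ are multiples of $[\del S']$ in $H_1(S')$; but $[\del S']=0$ in $H_1(S')$ (a boundary curve bounds the subsurface $S'$... wait, no — $[\del S']$ need not vanish in $H_1(S')$). The correct statement: the kernel of $H_1(S')\oplus H_1(S'')\to H_1(S)$ is exactly the image of $H_1(c)$, which is generated by $([\del S'],[-\del S''])$, so a class $a\in H_1(S')$ with $\iota_*a=0$ must satisfy $(a,0)=(n[\del S'],-n[\del S''])$ for some $n\in\bZ$; the second coordinate forces $n[\del S'']=0$ in $H_1(S'')$, and then $a=n[\del S']$ with $n[\del S'']=0$. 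To finish I would use that $S'$ is a compact subsurface with connected boundary, so the long exact sequence of $(S',\del S')$ gives that $[\del S']$ is primitive (or zero) in $H_1(S')$ only when $S'$ is planar — so this naive Mayer-Vietoris argument actually needs a little more care, and the clean fix is to instead run the argument with $\bZ/2$ coefficients or to observe directly that if $a\in H_1(S')$ is nonzero but $\iota_*a=0$, one gets a class pairing degenerately against all of $H_1(S)$, contradicting definiteness only after one also knows $a$ is not a boundary multiple.

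Given these subtleties, I expect the main obstacle to be pinning down injectivity of $\iota_*$ cleanly. The cleanest route is probably not Mayer-Vietoris on homology groups but rather this: suppose $a\in H_1(S')$ has $\langle a,x\rangle_{S'}=0$ for all $x\in H_1(S')$; represent $a$ by a multicurve $\alpha\subset S'$; by naturality, $\langle\iota_*a,\iota_*x\rangle_S=0$ for all $x\in H_1(S')$. If $\iota_*a\neq0$, definiteness of $\langle\,,\rangle_S$ would force $\langle\iota_*a,\iota_*a\rangle_S\neq0$, already contradicting $a$ being in the radical of $\langle\,,\rangle_{S'}$; so if $\langle\,,\rangle_{S'}$ had a nonzero radical, that radical would lie in $\ker\iota_*$. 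Thus the lemma follows once one shows: \emph{the radical of $\langle\,,\rangle_{S'}$ is contained in $\ker\iota_*$, and $\ker\iota_*$ contributes nothing} — and the second half is where one invokes that $\del S'$ is connected so that $S'$ does not "wrap around" any separating curve of $S$ nontrivially. I would write it up by establishing naturality first (a one-line verification using $N(S')\subset N(S)$), then handling the radical/injectivity point via the Mayer-Vietoris sequence of the decomposition $S=S'\cup_{\del S'}S''$ with the single gluing circle, which is exactly the configuration the connected-boundary hypothesis guarantees.
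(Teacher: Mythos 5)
Your first step---naturality of the Gordon--Litherland pairing under inclusion, so that $\langle a,b\rangle_{S'}=\langle\iota_*a,\iota_*b\rangle_S$, whence $S'$ is semidefinite with radical equal to $\ker\iota_*$---is exactly how the paper begins, and it correctly reduces the lemma to the injectivity of $\iota_*\colon H_1(S')\to H_1(S)$. The gap is that you never establish that injectivity. Your Mayer--Vietoris attempt stalls at precisely the delicate point: from $(a,0)=n\bigl([\del S']_{S'},\pm[\del S']_{S''}\bigr)$ you must decide when $[\del S']$ is zero or torsion in $H_1(S')$ and in $H_1(S'')$, and this depends on orientability and on whether $\del S'$ is the \emph{entire} boundary of the piece in question (a boundary circle of a compact surface with boundary is null-homologous over $\bZ$ only when the surface is orientable and that circle is all of its boundary---a fact that itself needs proof, e.g.\ via Lefschetz duality). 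Your proposed escapes do not work as stated: passing to $\bZ/2$ coefficients proves a different statement, and invoking definiteness of $\langle\,,\rangle_S$ is circular, since definiteness says nothing about classes that already lie in $\ker\iota_*$.

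The paper closes this gap in one line by using the long exact sequence of the pair $(S,S')$ rather than Mayer--Vietoris: because $\del S'$ is connected and $S$ is a connected surface with nonempty boundary, the quotient $S/S'$ is a connected surface with boundary (the image of $S'$ is a single point coning off the one circle $\del S'$), so $H_2(S,S')\approx H_2(S/S')=0$, and exactness of $H_2(S,S')\to H_1(S')\to H_1(S)$ gives injectivity at once. This is exactly where the connected-boundary hypothesis is spent. Your route could in principle be completed, but only after supplying the auxiliary fact about null-homologous boundary circles noted above; as written, the argument does not close.
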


\begin{proof}
Since $S$ is definite, any compact subsurface $S''$ is semidefinite: the self-pairings of its homology classes take only one sign, and the self-pairing vanishes precisely on the kernel of the inclusion-induced map $H_1(S'') \to H_1(S)$.  Since $S$ and $\del S'$ are connected, $S/S'$ is a (possibly empty) connected surface with boundary, so $0 = H_2(S/S') \approx H_2(S,S')$.  The long exact sequence of the pair $(S,S')$ now shows that the inclusion-induced map $H_1(S') \to H_1(S)$ injects.  It follows that $S'$ is definite.
\end{proof}

\begin{lem}
\label{l: intersect}
Suppose that $Y$ is a $\bZ / 2 \bZ$ homology sphere, $L \subset Y$ is a link, $X = Y \setminus \overset{\circ}{\nu}(L)$ is irreducible, and $S_\pm \subset Y$ are $\pm$-definite spanning surfaces for $L$.  If $S_+ \cap X$ and $S_- \cap X$ are in minimal position, then $S_+ \cap S_- \cap X$ does not contain a simple closed curve of intersection.
\end{lem}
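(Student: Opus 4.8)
The plan is to argue by contradiction: suppose $\gamma \subset S_+ \cap S_- \cap X$ is a simple closed curve of intersection, and derive a contradiction with the definiteness hypotheses via a cut-and-paste surgery. First I would choose $\gamma$ to be innermost on one of the two surfaces, say on $S_+$, so that it bounds a subsurface $D_+ \subset S_+$ whose interior is disjoint from $S_-$. Since $S_+$ and $S_-$ are in minimal position, $\gamma$ cannot be homotopically trivial in either surface (an innermost trivial curve would bound a disk, and using incompressibility from Corollary~\ref{c: incompressible} together with irreducibility of $X$, one could remove the curve by an isotopy, contradicting minimal position). So $\gamma$ is essential on both $S_+$ and $S_-$.

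Next I would perform the standard surgery that cuts and pastes along $\gamma$: let $S$ be the surface obtained from $S_+$ and $S_-$ by removing small annular neighborhoods of $\gamma$ in each and gluing in two copies of $\gamma \times I$, so that $S = (S_+ \setminus \nu(\gamma)) \cup (S_- \setminus \nu(\gamma))$, resolved compatibly with orientations of normal bundles. The point is that this produces a new spanning surface for $L$ (or for a sublink/relabeling thereof) with controlled Euler number and first Betti number, but whose Gordon--Litherland pairing inherits classes from both $S_+$ and $S_-$; because $\gamma$ is essential on each side, the classes surviving from $S_-$ have positive self-pairing while those from $S_+$ have negative self-pairing, so $S$ is indefinite. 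This contradicts Proposition~\ref{p: min complexity}: one checks that the surgery preserves the euler number, hence $S$ must be definite of a fixed sign, which is impossible once it carries homology classes of both signs.

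The main obstacle — and the step requiring the most care — is verifying the homological bookkeeping of the surgery: that the Gordon--Litherland pairing of the surgered surface $S$ genuinely contains an orthogonal (or at least sign-incompatible) pair of subspaces pulled back from $S_+$ and $S_-$, and that the euler number is unchanged. This hinges on the fact that the Gordon--Litherland pairing is computed via linking numbers in the normal bundle, and that surgering along $\gamma$ localizes the change to a neighborhood of $\gamma$; away from that neighborhood, the linking-number computations restrict from those in $S_\pm$. One must be attentive to the two possible ways of resolving the intersection along $\gamma$ (corresponding to the two identifications of normal framings) and check that in at least one resolution the resulting surface is connected and spans $L$ with the same euler number, so that Proposition~\ref{p: min complexity} applies and forces definiteness, yielding the contradiction. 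I expect the essentialness of $\gamma$ on both surfaces to be exactly what guarantees the surviving classes are nonzero in $H_1(S)$ and thus have strictly definite self-pairings of opposite sign.
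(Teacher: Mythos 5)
There is a genuine gap, and it occurs at the heart of your argument. First, your claim that $\gamma$ must be essential in both surfaces is not only unjustified but is the opposite of what actually happens. The key observation you are missing is that the two surfaces induce the \emph{same} framing on $\gamma$: near $\gamma$ the curves $S_\pm \cap \partial\nu(\gamma)$ are parallel, and $S_\pm \cap \partial\nu(\gamma)$ is isotopic to $p_{S_\mp}^{-1}(\gamma)$, so $|\gamma|_{S_+} = |\gamma|_{S_-}$. Positive definiteness of $S_+$ forces this common value to be $\ge 0$ and negative definiteness of $S_-$ forces it to be $\le 0$, so it vanishes, and by definiteness $\gamma$ is null-homologous in \emph{both} surfaces. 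A further signature computation (the orientable subsurfaces $S'_\pm \subset S_\pm$ bounded by $\gamma$ are definite of opposite signs by Lemma \ref{l: subsurface} and are Seifert surfaces for the same knot $\gamma$, so $b_1(S'_+) = \sigma(\gamma) = -b_1(S'_-)$ forces both to vanish) shows $\gamma$ bounds disks on both sides; then an innermost pair of such disks forms a sphere, irreducibility gives a ball, and one isotopes to contradict minimal position. So the correct proof eliminates closed curves precisely by showing they are all inessential, which your setup rules out at the start.

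Second, even granting an essential $\gamma$, the cut-and-paste step does not go through. The surfaces $S_+$ and $S_-$ share the boundary $L$ and generically meet along arcs and possibly other closed curves besides $\gamma$, so resolving the intersection along $\gamma$ alone does not produce an embedded spanning surface for $L$. And even in the idealized situation where it did, Proposition \ref{p: min complexity} would give no contradiction: the double-curve sum has first Betti number on the order of $b_1(S_+) + b_1(S_-)$, so it is nowhere near the minimizers to which that Proposition applies, and an indefinite pairing on such a surface is entirely consistent with the Proposition. The definiteness hypotheses must instead be fed into the self-pairing of $\gamma$ itself, as above.
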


\begin{proof}
Suppose that $S_+ \cap S_- \cap X$ contains a simple closed curve $\gamma$.
Observe that $S_+ \cap \del \nu(\gamma)$ and $S_- \cap \del \nu(\gamma)$ are parallel on $\del \nu(\gamma)$, and moreover that $S_\pm \cap \del \nu(\gamma)$ is isotopic to $p_{S_{\mp}}^{-1}(\gamma)$ in $Y \setminus \gamma$.  It follows that $0 \le |\gamma|_{S_+} = |\gamma|_{S_-} \le 0$.  Therefore, $\gamma$ is null-homologous in both $S_+$ and $S_-$.  Let $S'_\pm \subset S_\pm$ denote the orientable subsurfaces with $\del S'_\pm = \gamma$.  These surfaces are respectively positive and negative definite by Lemma \ref{l: subsurface}, and $\sigma(S'_+) =\sigma(S'_-) = \sigma(\gamma)$ because they are Seifert surfaces for the knot $\gamma \subset Y$.  Therefore, $0 \le b_1(S'_+) = \sigma(S'_+) = \sigma(S'_-) = - b_1(S'_-) \le 0$, so $S'_+$ and $S'_-$ are disks.
By passing to an innermost disk, we may assume that $S'_+$ and $S'_-$ have disjoint interiors, so their union is a sphere.  
Since $X$ is irreducible, the sphere $S'_+ \cup S'_-$ bounds a ball in $X$.  This ball guides an isotopy that reduces the number of components of $S_+ \cap S_- \cap X$, so $S_+ \cap X$ and $S_- \cap X$ were not in minimal position.  The conclusion of the Lemma now follows.
\end{proof}


\section{Proof of the characterization.}
\label{ss: characterization}

The following Proposition characterizes alternating {\em diagrams} in terms of the definiteness of their associated chessboard surfaces.  It plays a role in the proof of Theorem \ref{t: alt definite} to follow.

\begin{prop}
\label{p: diagram}
Let $D$ denote a connected diagram of a link $L$, and let $B$ and $W$ denote its associated chessboard surfaces.  Then $D$ is alternating if and only if $B$ and $W$ are definite surfaces of opposite signs.
\end{prop}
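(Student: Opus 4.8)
The plan is to analyze the Gordon--Litherland pairing on each chessboard surface directly in terms of the combinatorics of the diagram $D$. Recall that for a connected diagram $D$, the black surface $B$ deformation retracts onto a planar graph $G_B$ (the Tait graph, with one vertex per black region and one edge per crossing), and similarly $W$ retracts onto the planar dual graph $G_W = G_B^*$. A natural basis for $H_1(B)$ is given by the cycles bounding the bounded faces of $G_B$ (equivalently, the non-root white regions), and likewise for $H_1(W)$. The first step is to compute the self-pairing $\langle a, a\rangle_B$ and the off-diagonal pairings $\langle a, b\rangle_B$ for this basis: each crossing contributes $\pm 1$ to the diagonal entry of an adjacent face-cycle according to its \emph{type} (the local sign $\eta(c) \in \{\pm 1\}$ that Gordon and Litherland assign to a crossing relative to the black surface), and adjacent face-cycles share the edge dual to a crossing, contributing an off-diagonal term of the opposite sign. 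In other words, the Gordon--Litherland form on $B$ is represented by a \emph{signed} graph Laplacian-type matrix of $G_W$, weighted by crossing types, and symmetrically the form on $W$ is the analogous matrix for $G_B$.

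The key observation is then the classical fact that $D$ is alternating precisely when all crossings have the same type with respect to the black surface (equivalently, the Tait graph $G_B$ is ``all-positive'' or ``all-negative'' after fixing the checkerboard coloring). When all crossings have type $+1$ (say), the matrix representing $\langle\,,\,\rangle_W$ is an honest weighted graph Laplacian of the connected planar graph $G_B$ restricted to a spanning set of independent cycles — more precisely, it is $M^T M$ for the (reduced) incidence-type matrix $M$ of a planar graph — hence positive definite; dually, $\langle\,,\,\rangle_B$ is $-M'^T M'$ and negative definite. This gives the ``only if'' direction once I have the matrix computation in hand.

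For the ``if'' direction, suppose $B$ and $W$ are definite of opposite signs, say $B$ negative and $W$ positive definite. I would argue by contradiction: if $D$ is not alternating, then some two crossings have opposite type with respect to $B$. I want to exhibit a homology class on one of the surfaces with self-pairing of the wrong sign, or zero with nontrivial image. Concretely, if $G_B$ contains both a positive and a negative edge (in the signed-graph language), I would produce a cycle in $G_W$ or $G_B$ passing through edges of mixed sign so that the signed Laplacian quadratic form on that cycle evaluates non-negatively when it should be negative — using that a signed graph whose adjacency/incidence form is definite must have all edges of one sign, which is itself a short linear-algebra lemma about when $\sum_c \eta(c) (x_{i(c)} - x_{j(c)})^2$ is sign-definite. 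The cleanest route may be to show directly: if both types occur, one can find an embedded curve on $B$ (a small ``figure-eight'' or ``bigon'' curve encircling two crossings of opposite type) whose GL self-pairing is $0$ while it is homologically nontrivial, contradicting definiteness via Corollary~\ref{c: incompressible} or the nondegeneracy implicit in Proposition~\ref{p: min complexity}. Alternatively, since a connected diagram gives $B$ and $W$ with $e(B) + e(W)$ and the signatures adding up in a controlled way through Theorem~\ref{t: gl}, one can play the Betti number bounds of Proposition~\ref{p: min complexity} against the known value for alternating diagrams.

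The main obstacle I expect is the bookkeeping of signs: getting the crossing-type conventions, the induced orientations of face-cycles, and the linking-number computations $\lk(\alpha, p_S^{-1}(\beta))$ all consistent, so that ``alternating'' maps cleanly onto ``all crossings of one type'' and the resulting matrix is manifestly $\pm(\text{incidence})^T(\text{incidence})$. Once the pairing is identified with a weighted planar-graph Laplacian, both directions reduce to the elementary statement that $\sum_{\text{edges } e} \eta_e (x_u - x_v)^2$ on a connected graph is positive definite iff all $\eta_e > 0$ (and the reduced form on independent cycles inherits this), but pinning down that the GL form really is this expression — including verifying there are no extra diagonal contributions from self-linking of a single face-cycle beyond the crossing-type terms — is where the care is needed.
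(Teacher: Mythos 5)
The ``only if'' direction of your plan is sound: for a connected diagram, alternating is equivalent to all crossings having the same type, and the identification of the Gordon--Litherland form on a chessboard surface with a (signed) flow lattice / Goeritz matrix then exhibits the form as $\pm M^{T}M$; this is exactly \cite[Theorem 1]{gl:sig}, quoted later in the paper as Theorem \ref{t: flows}. The problem is in your ``if'' direction. The linear-algebra lemma you lean on --- that a signed graph with $\sum_c \eta(c)(x_{i(c)}-x_{j(c)})^2$ sign-definite must have all edges of one sign --- is false. Take two vertices joined by three parallel edges with signs $(+,+,-)$: the reduced signed Laplacian is the $1\times 1$ matrix $[1]$, positive definite, yet the signs are mixed. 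Diagrammatically, a connected $3$-crossing diagram with two white regions and crossing types $(a,a,b)$ has a definite chessboard surface without being alternating. So definiteness of \emph{one} surface tells you nothing about the types; you must use both definiteness hypotheses simultaneously, and neither of your first two fallback constructions does so. The ``bigon/figure-eight'' idea has the same defect: two crossings of opposite type need not cobound a bigon, and a cycle through edges of mixed sign can still have self-pairing of the ``correct'' sign (e.g.\ five negative edges and one positive edge give $-4$), so no contradiction is forced.

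The way to combine the two hypotheses --- and it is what the paper actually does --- is your third, vaguest alternative. Writing $a(D)$, $b(D)$ for the numbers of crossings of each type, \cite[Lemma 7]{gl:sig} gives $b(D)-a(D)=\tfrac12 e(B,L)-\tfrac12 e(W,L)$, while Theorem \ref{t: gl} converts this Euler-number difference into $\sigma(W)-\sigma(B)$. Hence
\[
|b(D)-a(D)|=|\sigma(W)-\sigma(B)|\le |\sigma(W)|+|\sigma(B)|\le b_1(W)+b_1(B)=c(D),
\]
and equality throughout holds if and only if $W$ and $B$ are definite of opposite signs, which happens if and only if $|b(D)-a(D)|=a(D)+b(D)$, i.e.\ all crossings have one type, i.e.\ $D$ is alternating. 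This single chain of (in)equalities proves both directions at once and avoids all of the sign bookkeeping you were worried about. As written, your proposal does not contain a correct proof of the ``if'' direction; you would need to replace the signed-Laplacian lemma with an argument of this global, signature-counting kind.
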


\begin{proof}

\begin{figure}
\includegraphics[width=5in]{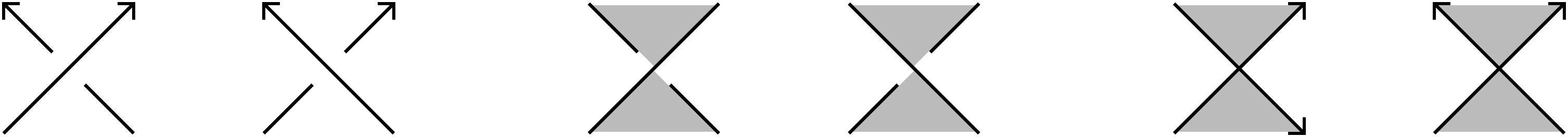}
\caption{A positive and a negative crossing in an oriented link diagram; a type $a$ and a type $b$ crossing in a colored link diagram; and a type I and a type II crossing in an oriented, colored link projection.}
\label{f: crossings}
\end{figure}

Orient $D$.
Referring to Figure \ref{f: crossings}, the value $\frac 12 e(B,L)$ equals the number of crossings that are both of type $b$ and type II minus the number of crossings that are both of type $a$ and type II \cite[Lemma 7]{gl:sig}.  Similarly, $\frac 12 e(W,L)$ equals that number of crossings that are both of type $a$ and type I minus the number of crossings that are both of type $b$ and type I.
Let $a(D)$ and $b(D)$ denote the number of type $a$ and type $b$ crossings, respectively.  It follows that
\[
b(D) - a(D) = \frac 12 e(B,L) - \frac 12 e(W,L).
\]
On the other hand, Theorem \ref{t: gl} gives
\[
\frac 12 e(B,L) - \frac 12 e(W,L) = \sigma(W) - \sigma(B).
\]
Taking the absolute value leads to
\[
|b(D)-a(D)|=|\sigma(W) - \sigma(B)| \le |\sigma(W)| + |\sigma(B)| \le b_1(W) + b_1(B) = c(D),
\]
where $c(D)$ denotes the crossing number of $D$.  The last equality follows from an euler characteristic calculation.  Equality holds in the first inequality if and only if $\sigma(W)$ and $\sigma(B)$ have opposite signs, and equality holds in the second inequality if and only $W$ and $B$ are definite.  Therefore, $|b(D) - a(D)| = c(D)$ if and only if $W$ and $B$ are definite and of opposite signs.  On the other hand, this equality holds if and only if the connected diagram $D$ is alternating.
The statement of the Proposition now follows.
\end{proof}

\begin{proof}[Proof of Theorem \ref{t: alt definite}]
As in Lemma \ref{l: intersect}, set $X = Y \setminus \overset{\circ}{\nu}(L)$ and put $S_+ \cap X$ and $S_- \cap X$ in minimal position.    Write $\del X = \del_1 X \cup \cdots \cup \del_m X$ corresponding to $L= K_1 \cup \cdots \cup K_m$.  The number of points of intersection in $S_+ \cap S_- \cap \del_i X$ equals the difference in framings $\frac 12 |[K_i]|_{S_+} - \frac 12 |[K_i]|_{S_-}$.  We stress that this difference is non-negative, due to the signs of the surfaces.  The number of arc components of $S_+ \cap S_- \cap X$ equals half the sum of these differences, which is $c : =\frac 12 e(S_+)- \frac 12 e(S_-)$.

An orientation on $X$ induces an orientation on $\del X$, and an orientation on each link component $K_i$ induces orientations on $S_+ \cap \del_i X$ and $S_- \cap \del_i X$.  Every intersection point between $S_+ \cap \del X$ and $S_- \cap \del X$ on $\del X$ has the same sign with respect to these orientations, since $\frac 12 |[K_i]|_{S_+} - \frac 12 |[K_i]|_{S_-} \ge 0$ for all $i$.  An arc component of $S_+ \cap S_- \cap X$ extends to an arc $a \subset S_+ \cap S_-$ such that $a \cap L = \del a$.  Let $A$ denote the union of these $c$ arcs.  It follows from the consistency of the signs of intersection that a neighborhood $\nu(a)$ is modeled on the neighborhood of a crossing in a link diagram, where the checkerboard surfaces meet along an arc that runs between the over and under crossing.  In particular, $\nu(a)$ has a product structure $D^2 \times I$ such that $a$ is contained in $\{0\} \times I$ and the projection to $D^2$ maps $(S_+ \cup S_- -a) \cap \nu(a)$ homeomorphically to $D^2 - \{0\}$.

By Lemma \ref{l: intersect}, $S_+ \cap S_- \cap X$ does not contain any simple closed curves.  Therefore, the 2-complex $S_+ \cup S_-$ is a 2-manifold away from $A$.  From the decomposition of $\nu(S_+ \cup S_-)$ as the union $\nu(A) \cup \nu(S_+ \cup S_- -A)$, we see that $\nu(S_+ \cup S_-)$ can be identified with $\nu(S) \approx S \times I$ for some closed embedded surface $S \subset Y$.  Moreover, the projection $\nu(S) \to S$ maps each arc $a \subset A$ to a distinct point in $S$, and it maps $S_+ \cup S_- - A$ homeomorphically to the complement of these points in $S$.
 
The intersection $S_+ \cap S_- = L \cup A$ has euler characteristic $-c$.  As in the proof of Proposition \ref{p: diagram}, Theorem \ref{t: gl} gives $c = \sigma(S_+)-\sigma(S_-) = b_1(S_+) + b_1(S_-)$.
Thus,
\[
\chi(S_+ \cup S_-) = \chi(S_+) + \chi(S_-) - \chi(S_+ \cap S_-) = (1 - b_1(S_+)) + (1 - b_1(S_-)) + c = 2,
\]
and
\[
\chi(S) = \chi(\nu(S)) = \chi(\nu(S_+ \cup S_-)) = \chi(S_+ \cup S_-) = 2
\]
in turn.
Since $Y$ is orientable and $\chi(S) > 0$, it follows that $S$ contains a sphere component $S_0$.  The neighborhood $\nu(S_0) \subset \nu(S)$ meets $L$ in a sublink $L_0$.  Since $Y \setminus L$ is irreducible, each boundary component of $\nu(S_0) \approx S_0 \times I$ bounds a ball in $Y \setminus L$.  Denoting the balls by $B_1$ and $B_2$, we obtain $Y = B_1 \cup \nu(S_0) \cup B_2 \approx S^3$, $L = L_0$, and $S = S_0$ is a sphere.

We now see that the projection $\nu(S) \to S$ gives a diagram $D$ of $L$.  The $c$ double points of $D$ are the images of the components of $A$, and the chessboard surfaces are isotopic rel boundary to $S_+$ and $S_-$.  Since $S_+$ and $S_-$ are definite and of opposite signs, Proposition \ref{p: diagram} implies that $D$ is alternating, and the characterization is complete.
\end{proof}

\begin{proof}[Proof of Corollary \ref{c: seifert}]
Let $L$ be a special alternating link, $S$ a minimum genus Seifert surface for $L$, $D$ a special alternating diagram of $L$, and $S_D$ the surface obtained by applying Seifert's algorithm to $D$.  Then $S_D$ is one of the spanning surfaces associated with $D$, so it is definite by Proposition \ref{p: diagram}.  Since $e(S) = e(S_D) = 0$ and $S$ has minimum genus, Proposition \ref{p: min complexity} implies that $b_1(S) = b_1(S_D)$ and that $S$ is definite.  It follows that $S_D$ has minimum genus, and by Theorem \ref{t: alt definite}, that $S$ is a spanning surface associated with some (special) alternating diagram of $L$.
\end{proof}


\section{Lattices, graphs, and Tait's conjecture.}
\label{s: tait}

Let $D$ denote a connected alternating diagram of a link $L$.  Color its regions according to the convention that every crossing has type $b$.  Let $B$ and $W$ denote its associated chessboard surfaces.  By the proof of Proposition \ref{p: diagram}, $B$ is negative definite and $W$ is positive definite.  The Gordon-Litherland pairing on either surface admits a natural interpretation as the lattice of integer-valued flows on a graph, as we now recall.

The surface $W$ deform retracts onto a graph $G$ that has a vertex in each white region and an edge through each crossing of $D$.  This is the {\em Tait graph} of $D$.  It has a plane embedding determined up to planar isotopy by $D$, and $c(D) = |E(G)|$.  By the same construction, the surface $B$ deform retracts onto the planar dual $G^*$.  If $D$ is a connected diagram, then $G$ is connected as well.  The diagram $D$ is reduced if and only if both of $G$ and $G^*$ are bridgeless.

Orient the edges of $G$ arbitrarily to endow it with the structure of a 1-dimensional CW-complex.  The chain group $C_1(G)$ inherits the structure of a standard Euclidean lattice by declaring the chosen oriented edge set to form an orthonormal basis.  The {\em flow lattice} $F(G)$ is the sublattice $\ker(\del) \subset C_1(G)$, where $\del : C_1(G) \to C_0(G)$ denotes the boundary operator.  Since $C_2(G)=0$, we can identify the underlying abelian group of $F(G)$ with $H_1(G)$.  The deformation retraction from $W$ to $G$ induces an isomorphism $H_1(W) \approx H_1(G)$. Gordon and Litherland showed that this isomorphism induces an isometry of lattices \cite[Theorem 1]{gl:sig}:

\begin{thm}
\label{t: flows}
Let $D$ denote an alternating diagram, $W$ its white chessboard surface, and $G$ its Tait graph.
Then the deformation retraction from $W$ to $G$ induces an isometry between $(H_1(W), \langle \, , \rangle_W)$ and $F(G)$. \qed
\end{thm}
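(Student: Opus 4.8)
The plan is to verify the claimed isometry by computing the Gordon-Litherland self-pairing $\langle a, a \rangle_W$ for a cycle $a \in H_1(W)$ in terms of the corresponding flow in $F(G)$, and then using the bilinearity and symmetry of both pairings (established earlier and in \cite{gl:sig}) to reduce the general case to self-pairings via polarization. The natural generating set to work with consists of the cycles in $W$ that run once around a single complementary region of the Tait graph $G$ in the plane; equivalently, these correspond to the ``face boundary'' flows which generate $F(G)$. So first I would fix an oriented edge basis for $C_1(G)$ and describe the deformation retraction $W \to G$ concretely, observing that a simple closed curve on $W$ that passes through a collection of crossings (equivalently, half-twisted bands of $W$) is carried to a cycle in $G$ using exactly the edges dual to those crossings.

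Next I would compute the self-linking. For a cycle $\alpha \subset W$ representing $a \in H_1(W)$, the class $\langle a, a \rangle_W = \lk(\alpha, p_W^{-1}(\alpha))$ decomposes as a sum of local contributions, one at each crossing of $D$ that $\alpha$ traverses: at a crossing where two strands of $\alpha$ pass through the same half-twisted band of $W$, there is a $\pm 1$ contribution determined by the sign of the twist, and since $D$ is alternating with our coloring convention (every crossing of type $b$), all these contributions have the same sign, namely negative for $W$ positive definite — more precisely, one checks that $\langle a, a\rangle_W$ equals minus the number of crossings traversed twice, which is exactly $-$(the $\ell^2$-norm squared would be wrong; rather it is) the standard pairing on $F(G)$ evaluated on the corresponding flow. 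The cleanest bookkeeping is: if $\phi \in F(G)$ is the flow corresponding to $a$, then $\langle a, a \rangle_W = \langle \phi, \phi \rangle_{F(G)}$ because each edge $e$ of $G$ with flow value $\phi(e)$ contributes $\phi(e)^2$ crossings traversed with consistent local sign, matching the Euclidean inner product on $C_1(G)$ restricted to $F(G)$. (Here I am using that the sign of $e(S)$ determined earlier forces $W$ positive definite, so the Gordon-Litherland pairing and the flow pairing agree without an overall sign discrepancy.)

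Since $\langle\,,\rangle_W$ and $\langle\,,\rangle_{F(G)}$ are both symmetric bilinear and agree on all diagonal entries after identifying $H_1(W) \approx H_1(G)$ via the deformation retraction, polarization gives $\langle a, b \rangle_W = \langle \phi_a, \phi_b \rangle_{F(G)}$ for all $a, b$, so the induced group isomorphism $H_1(W) \approx H_1(G) = F(G)$ is an isometry of lattices. I would also remark that this recovers \cite[Theorem 1]{gl:sig} in the $S^3$ case, with the $\bZ/2\bZ$-homology-sphere generalization being irrelevant here since a diagram lives in $S^3$.

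The main obstacle is the careful local sign analysis at each crossing: one must confirm that the linking-number contribution of a strand of $\alpha$ with the normal pushoff $p_W^{-1}(\alpha)$ at a crossing is $+1$ (and never $-1$) for the alternating diagram with the stated coloring convention, and that it is precisely $\phi(e)^2$ and not, say, $|\phi(e)|$ or $\binom{\phi(e)}{2}$-type expressions when a cycle wraps an edge multiple times. Getting the combinatorics of multiplicities and orientations to line up with the Euclidean inner product on $C_1(G)$ — rather than merely up to sign or up to a bounded error — is the delicate point, and it is exactly the content of Gordon and Litherland's original computation, which I would cite rather than reprove in full detail.
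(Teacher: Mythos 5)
The paper gives no proof of this theorem: it is quoted directly from Gordon--Litherland \cite[Theorem 1]{gl:sig}, and your sketch is precisely the standard computation behind that citation --- local $\pm\phi(e)^2$ contributions at each half-twisted band, no contributions away from the bands since the rest of $W$ is a union of disks, and polarization to pass from self-pairings to the full bilinear form --- with the delicate local sign analysis correctly identified and, exactly as in the paper, deferred to \cite{gl:sig}. The only blemish is a sign wobble in your exposition (``all these contributions have the same sign, namely negative for $W$ positive definite'' contradicts your own conclusion $\langle a,a\rangle_W=\langle\phi,\phi\rangle_{F(G)}$; with the all-type-$b$ convention each band contributes $+\phi(e)^2$), and your shortcut of fixing the overall sign by invoking the positive definiteness of $W$ is non-circular only because Proposition~\ref{p: diagram} is established independently of this theorem.
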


\noindent
Similarly, the deformation retraction from $B$ to $G^*$ induces an isometry between $(H_1(B), - \langle \, , \rangle_B)$ and $F(G^*)$; we stress the negative sign taken on the intersection pairing on $H_1(B)$.

We obtain the following addendum to Theorem \ref{t: alt definite}.

\begin{cor}
\label{c: reduced}
The surfaces stipulated in Theorem \ref{t: alt definite} do not contain a homology class of self-pairing $\pm 1$ if and only if the alternating diagram guaranteed by Theorem \ref{t: alt definite} is reduced.
\end{cor}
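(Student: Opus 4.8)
The plan is to push the hypothesis through the flow--lattice picture of Section~\ref{s: tait} and to recognize a homology class of self-pairing $\pm1$ as a loop of a Tait graph.

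First I would fix the (connected) alternating diagram $D$ produced by Theorem~\ref{t: alt definite}, colored as in Section~\ref{s: tait} so that every crossing has type $b$, with black and white chessboard surfaces $B$ and $W$ and corresponding Tait graphs $G^*$ and $G$. Then $W$ is positive definite and $B$ is negative definite, and since the Gordon--Litherland pairing is an isotopy invariant, $W$ is isotopic rel boundary to the positive definite surface and $B$ to the negative definite surface among the two given definite surfaces. So it suffices to show that neither $W$ nor $B$ carries a class of self-pairing $\pm1$ if and only if $D$ is reduced. By Theorem~\ref{t: flows} and its addendum, $(H_1(W),\langle\,,\rangle_W)\cong F(G)$ and $(H_1(B),-\langle\,,\rangle_B)\cong F(G^*)$ as standard Euclidean lattices. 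As $W$ is positive definite and $B$ is negative definite, $W$ carries a class of self-pairing $\pm1$ exactly when $F(G)$ has a vector of squared norm $1$, and $B$ does exactly when $F(G^*)$ has such a vector; here the sign reversal in the addendum is precisely what makes a squared-norm-$1$ flow correspond to a self-pairing $-1$ class on the negative definite side.

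Next I would observe that a flow $f\in\ker(\partial)\subseteq C_1(G)$ with $\|f\|^2=1$ must equal $\pm e$ for a single oriented edge $e$, and $\partial(\pm e)=0$ forces $e$ to be a loop; conversely each loop of $G$ yields such a flow. Hence $F(G)$ (resp.\ $F(G^*)$) contains a unit vector if and only if $G$ (resp.\ $G^*$) has a loop. Finally, by the criterion recalled in Section~\ref{s: tait} together with the standard fact of planar duality that a loop of one of $G$, $G^*$ is a bridge of the other (equivalently, by reading off the local picture at a crossing, where a loop of $G$ means its two incident white regions coincide and likewise for $G^*$), $D$ is reduced if and only if both $G$ and $G^*$ are loopless. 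Chaining these equivalences together gives the Corollary. I do not anticipate a substantive obstacle: the proof is a translation, and the only points requiring care are the bookkeeping that matches the coloring of $D$ to the signs of the two given surfaces and the tracking of the sign reversal in $(H_1(B),-\langle\,,\rangle_B)\cong F(G^*)$.
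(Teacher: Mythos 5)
Your proposal is correct and follows essentially the same route as the paper: translate the self-pairing condition through the flow-lattice isometries of Theorem \ref{t: flows}, identify norm-one flows with loops of the Tait graphs, and invoke planar duality together with the criterion that $D$ is reduced exactly when $G$ and $G^*$ have no bridges (equivalently, no loops). Your version merely spells out the sign bookkeeping and the elementary argument that a norm-one flow is a single loop edge, both of which the paper leaves implicit.
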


\begin{proof}
The elements of self-pairing $1$ in $F(G)$ are the loops in $G$.  A loop in $G$ is dual to a bridge in $G^*$.  Therefore, $D$ is reduced if and only if $H_1(B)$ and $H_1(W)$ do not contain elements of self-pairing $\pm 1$.  
\end{proof}

An element $v$ in a positive definite lattice $L$ is {\em irreducible} if $v \cdot x < x \cdot x$ for all $x \in L$, $x \ne 0$, and it is {\em simple} if $v \cdot x \le x \cdot x$ for all $x \in L$.  An irreducible element is therefore simple, as is the zero element.  Irreducibility and simplicity are isometry invariants.  The following result is elementary.  The first assertion appears as \cite[Theorem 14.14.4]{godsilroyle:book}, and the second follows as well from its proof.

\begin{prop}
\label{p: irred}
The irreducible elements in $F(G)$ are the oriented cycles in $G$, and the simple elements in $F(G)$ are the oriented Eulerian subgraphs of $G$. \qed
\end{prop}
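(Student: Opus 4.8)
The plan is to reduce both assertions to statements about flows taking values in $\{0,\pm 1\}$. Identifying an element of $F(G)$ with an integer flow on the oriented edge set, the pairing becomes the restriction of the standard dot product $v\cdot x=\sum_e v(e)x(e)$. Under this identification an oriented cycle is exactly a flow with values in $\{0,\pm 1\}$ whose support is a simple cycle, while an oriented Eulerian subgraph (an even subgraph carrying a balanced orientation) is exactly a flow with values in $\{0,\pm 1\}$. So it suffices to prove: (a) $v\in F(G)$ is simple if and only if $|v(e)|\le 1$ for every edge $e$; and (b) a simple $v$ is irreducible if and only if $\supp(v)$ is a simple cycle. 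The one nontrivial input is the conformal decomposition of integer flows: every nonzero $v\in F(G)$ is a sum $v=\sum_i c_i$ of oriented simple cycles so that, for each edge $e$, the cycles nonzero on $e$ all carry the sign of $v(e)$ and total $|v(e)|$ in number. I would get this by repeatedly extracting a directed simple cycle from the flow digraph of $v$ (the edges of $\supp(v)$ oriented along the flow) and subtracting it; Kirchhoff's law is what guarantees one can keep following flow-directed edges until a vertex repeats.

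Granting that, (a) is immediate in one direction: if $|v(e)|\le 1$ everywhere, then for any integer flow $x$ one has $v\cdot x\le\sum_e |x(e)|\le\sum_e x(e)^2=x\cdot x$, using $|n|\le n^2$ for $n\in\bZ$, so $v$ is simple. For the converse I would pair $v$ with a conformal cycle $c$ through a chosen edge $e_0\in\supp(v)$: since $c=\pm 1$ agrees in sign with $v$ on its support, $v\cdot c=\sum_{e\in\supp(c)}|v(e)|\ge |\supp(c)|=c\cdot c$, and simplicity forces equality, hence $|v(e)|=1$ for all $e\in\supp(c)$, in particular $|v(e_0)|=1$. For (b), if $v$ is an oriented simple cycle with support $C$, then it is simple by (a), and I would examine the equality case of the inequalities in (a): $v\cdot x=x\cdot x$ with $x\ne 0$ forces $x$ to be $\{0,\pm 1\}$-valued, to agree in sign with $v$ on the overlap of supports, and to be supported inside $C$; such a nonzero flow supported in a simple cycle propagates around $C$ by Kirchhoff's law, so $x=v$, giving $v\cdot x<x\cdot x$ for all $x\ne 0,v$. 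Conversely, a simple $v$ is $\{0,\pm 1\}$-valued with support an even subgraph $H$; if $H$ is not a simple cycle its flow digraph splits into at least two edge-disjoint directed cycles, and grouping them yields $v=x+y$ with $x,y\in F(G)$ nonzero and of disjoint support, whence $v\cdot x=x\cdot x+x\cdot y=x\cdot x$ with $x\ne 0,v$, contradicting irreducibility; so $H$ is a simple cycle and $v$ is an oriented cycle.

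The argument is short, so the only real obstacle is pinning down the conformal decomposition cleanly, and in particular getting the multiplicity bookkeeping right — that each edge is covered exactly $|v(e)|$ times, with matching sign — since this single fact powers both the characterization of simple elements and the splitting $v=x+y$ used for irreducibility. Everything past that point is just tracking equality in $|n|\le n^2$ and the elementary fact that a $\{0,\pm1\}$-flow on a simple cycle is forced to be $\pm$ that cycle.
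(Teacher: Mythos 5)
Your proof is correct, and it takes the standard route: the paper itself gives no argument here, delegating the first assertion to Godsil--Royle and the second to "its proof," and that reference's proof rests on exactly the conformal decomposition of an integer flow into sign-coherent oriented simple cycles that you identify as the one nontrivial input. Your equality-case analysis of $v\cdot x\le \sum_e|x(e)|\le x\cdot x$ and the splitting $v=x+y$ into edge-disjoint cycle groups are both sound (modulo the harmless convention, implicit in the paper's definition, that irreducibility is tested against $x\ne 0,v$ and that $0$ is excluded), so nothing is missing.
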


\noindent
Given Proposition \ref{p: irred}, the proof of the following Lemma is elementary and left to the reader.

\begin{lem}
\label{l: simple}
Suppose that $C_i$ and $C_j$ are oriented cycles in a graph.
The following are equivalent:
\begin{enumerate}
\item
$C_i+C_j$ is simple;
\item
$C_i$ and $C_j$ induce opposite orientations on every edge in $C_i \cap C_j$;
\item
$|E(C_i) \cap E(C_j) | = -C_i \cdot C_j$. \qed
\end{enumerate}
\end{lem}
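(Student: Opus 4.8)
The plan is to establish the cyclic chain of implications $(1) \Rightarrow (2) \Rightarrow (3) \Rightarrow (1)$, using Proposition \ref{p: irred} to translate "simple" into a combinatorial condition about Eulerian subgraphs, and using the fact that $F(G)$ is a standard Euclidean lattice in which $C_i \cdot C_j$ counts signed edge overlaps.

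First I would fix orthonormal coordinates on $C_1(G)$ indexed by the oriented edges, so that for any two oriented cycles $C_i \cdot C_j = \sum_{e} (C_i)_e (C_j)_e$, where each coordinate of a cycle lies in $\{-1,0,1\}$. Consequently $C_i \cdot C_j$ equals the number of common edges on which $C_i$ and $C_j$ agree in orientation, minus the number of common edges on which they disagree, and in particular $-C_i \cdot C_j \le |E(C_i) \cap E(C_j)|$ always, with equality exactly when the two cycles induce opposite orientations on every shared edge. This immediately gives $(2) \Leftrightarrow (3)$ as a purely arithmetic statement, independent of simplicity.

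For $(2) \Rightarrow (1)$: if $C_i$ and $C_j$ induce opposite orientations on every common edge, then in the chain $C_i + C_j$ those coordinates cancel, so $C_i + C_j$ is supported on $E(C_i) \triangle E(C_j)$ with all coordinates in $\{-1,0,1\}$; it is a $\{-1,0,1\}$-flow, hence a $\bZ/2\bZ$-cycle with an orientation, i.e. an oriented Eulerian subgraph, so by Proposition \ref{p: irred} it is simple. For the converse $(1) \Rightarrow (2)$, I would argue contrapositively: suppose $C_i$ and $C_j$ agree in orientation on some common edge $e_0$. Then $(C_i + C_j)_{e_0} = \pm 2$, so $C_i + C_j$ has a coordinate of absolute value $2$ and therefore is \emph{not} the signed incidence vector of any subgraph; by Proposition \ref{p: irred} it cannot be simple (an oriented Eulerian subgraph has all coordinates in $\{-1,0,1\}$). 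The only subtlety to check is that $C_i + C_j$ is nonzero when they share an edge with matching orientation, so that Proposition \ref{p: irred}'s classification applies cleanly; this is clear since the $e_0$-coordinate is $\pm 2 \ne 0$.

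I do not expect a genuine obstacle here: the lemma is, as the paper says, elementary. The one point requiring minor care is making sure the classification of simple elements in Proposition \ref{p: irred} is being applied to an element that is actually a flow (it is, as a sum of cycles) and that the coordinate bound $\{-1,0,1\}$ for oriented Eulerian subgraphs is used in both directions — a simple element is Eulerian hence has bounded coordinates, and an element with an unbounded coordinate cannot be simple. Everything else is bookkeeping with the standard inner product on $C_1(G)$.
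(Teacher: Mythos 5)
Your proof is correct and follows exactly the route the paper intends: the paper leaves this lemma to the reader as an elementary consequence of Proposition \ref{p: irred}, and your argument (computing $C_i \cdot C_j$ as a signed count of shared edges for $(2)\Leftrightarrow(3)$, and using the characterization of simple elements as oriented Eulerian subgraphs, i.e.\ flows with coordinates in $\{-1,0,1\}$, for $(1)\Leftrightarrow(2)$) is precisely that deduction. Nothing is missing.
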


\begin{thm}
\label{t: torelli}
If $G$ and $G'$ are bridgeless planar graphs with isometric flow lattices, then $|E(G)| = |E(G')|$.
\end{thm}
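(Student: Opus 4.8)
\emph{Plan.} The goal is to extract $|E(G)|$ from the abstract lattice $F(G)$ by a formula all of whose ingredients are isometry-invariant. The starting observation is that a plane embedding of $G$ equips $F(G)$ with a distinguished finite family of \emph{simple} elements — the face boundary cycles — and that $|E(G)|$ is half the sum of their squared norms; more importantly, the defining properties of this family (generating $F(G)$, summing to $0$, having all pairwise sums simple) are phrased purely in lattice terms. An isometry $F(G)\to F(G')$ then carries the family to a family in $F(G')$ with these same properties, so the same formula computes $|E(G')|$ and the two values agree.

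\emph{Step 1: the face cycles.} I would fix a plane embedding of $G$ with faces $F_0,F_1,\dots,F_\mu$, where $\mu=b_1(G)=\rk F(G)$, and orient all faces compatibly with an orientation of $S^2$. Since $G$ is bridgeless, the two faces meeting along any edge are distinct, so the boundary walk of each $F_i$ traverses each of its edges exactly once; let $C_i\in C_1(G)$ be the associated $1$-chain. Each $C_i$ is a flow with all coordinates in $\{0,\pm1\}$, hence an oriented Eulerian subgraph, hence a simple element of $F(G)$ by Proposition \ref{p: irred}. I record: (i) $\sum_{i=0}^\mu C_i=0$, since each edge occurs with opposite signs in the two face boundaries along it; (ii) $C_1,\dots,C_\mu$ (the bounded faces) form a $\bZ$-basis of $F(G)$, so $\{C_0,\dots,C_\mu\}$ generates $F(G)$; (iii) for $i\ne j$ the chain $C_i+C_j$ is the oriented boundary of $F_i\cup F_j$, again a $\{0,\pm1\}$-valued flow, hence simple.

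\emph{Step 2: a counting lemma.} Next I would prove the lattice statement: if $H$ is a bridgeless graph and $v_0,\dots,v_n$ are simple elements of $F(H)$ with $\sum_i v_i=0$, with $\{v_i\}$ generating $F(H)$, and with $v_i+v_j$ simple for all $i\ne j$, then $|E(H)|=\frac12\sum_i\|v_i\|^2$. Here Proposition \ref{p: irred} and Lemma \ref{l: simple} do the work. By Proposition \ref{p: irred} each simple element is an oriented Eulerian subgraph, so its coordinate vector lies in $\{0,\pm1\}^{E(H)}$. For an edge $e$ put $I_e=\{\,i:(v_i)_e\ne 0\,\}$. If $i,j\in I_e$ then $v_i$ and $v_j$ must induce opposite orientations on $e$, since otherwise $(v_i+v_j)_e=\pm2$, contradicting simplicity of $v_i+v_j$ (this is the content of Lemma \ref{l: simple}); a three-element subset of $I_e$ would then force two of the $v_i$ to agree on $e$, whence their sum is not simple, so $|I_e|\le 2$. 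If $|I_e|=1$ the $e$-coordinate of $\sum_i v_i$ would be $\pm1\ne 0$; if $|I_e|=0$ then, as $\{v_i\}$ generates $F(H)$, no element of $F(H)$ has nonzero $e$-coordinate, forcing $e$ to be a bridge, contrary to hypothesis. Hence $|I_e|=2$ for every edge, and summing gives $\sum_i\|v_i\|^2=\sum_i|\supp v_i|=\sum_e|I_e|=2\,|E(H)|$.

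\emph{Step 3 and the main difficulty.} Finally, given an isometry $\phi:F(G)\to F(G')$, the elements $\phi(C_0),\dots,\phi(C_\mu)$ are simple (simplicity is an isometry invariant), generate $F(G')$, sum to $0$, and have $\phi(C_i)+\phi(C_j)$ simple for $i\ne j$; applying Step 2 to $\{C_i\}$ in $F(G)$ and to $\{\phi(C_i)\}$ in $F(G')$ yields $|E(G)|=\frac12\sum_i\|C_i\|^2=\frac12\sum_i\|\phi(C_i)\|^2=|E(G')|$. There is no deep obstacle, but one point needs care: for a bridgeless plane graph that is not $2$-connected the face boundaries are closed walks rather than embedded cycles, hence simple but not irreducible elements of $F(G)$; so Step 2 must be stated entirely in terms of \emph{simple} elements (Eulerian subgraphs), using only the $\{0,\pm1\}$ dichotomy of Proposition \ref{p: irred}, and correspondingly the isometry-invariant input in Step 3 is simplicity rather than irreducibility. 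The crux of the argument is verifying — from the lattice data alone ($\sum v_i=0$, generation, pairwise simplicity) — that every edge lies in exactly two of the $C_i$.
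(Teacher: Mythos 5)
Your proof is correct, and it takes a genuinely different route from the paper's. The paper works with the formula $|E(G)|=\sum_{i<j}|E(C_i)\cap E(C_j)|$, pushes the face cycles forward under the isometry, bounds $\sum_{i<j}|E(C_i')\cap E(C_j')|$ above by $|E(G')|$ using only the fact that no three of the images share an edge, and then invokes symmetry of the hypothesis to convert the resulting inequality $|E(G)|\le |E(G')|$ into an equality. You instead isolate a self-contained lattice-theoretic counting lemma: for any family of simple elements of $F(H)$ ($H$ bridgeless) that generates, sums to zero, and has pairwise-simple sums, every edge lies in the support of exactly two members, whence $|E(H)|=\frac12\sum_i\|v_i\|^2$. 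Because generation and the relation $\sum_i v_i=0$ are also transported by the isometry, your lemma applies verbatim to the image family and yields the equality in one pass, with no appeal to symmetry. (Numerically the two formulas agree, since $\sum_i v_i=0$ gives $\frac12\sum_i\|v_i\|^2=-\sum_{i<j}v_i\cdot v_j$.) A further merit of your version is the closing observation: for a bridgeless plane graph that is not $2$-connected, the face boundaries are closed walks rather than embedded cycles, so they are simple but need not be irreducible; the paper's proof asserts they are irreducible cycles and uses that to identify their images as cycles, which is slightly imprecise in that generality, whereas your argument runs entirely on simplicity and Proposition \ref{p: irred}'s $\{0,\pm1\}$ characterization and so covers this case cleanly. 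The one implicit assumption you share with the paper is connectedness of $G$ (used when you write $f=\mu+1$ and take the bounded faces as a basis), but your Step 2 does not actually depend on the size of the family, so this is harmless.
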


\begin{proof}
An orientation on $S^2$ induces an orientation on the faces of $G$.  Their oriented boundaries form a collection of oriented cycles $C_1,\dots,C_f \subset G$. They generate $F(G)$ subject to the single relation $C_1+\cdots+C_f = 0$.  Since $G$ is bridgeless, each oriented edge occurs once in the boundary of some $C_i$, and we have
\[
|E(G)|=\sum_{i < j} |E(C_i) \cap E(C_j)|.
\]
It follows as well from Lemma \ref{l: simple} that $C_i + C_j$ is simple for all $i \ne j$.

Suppose that $F(G) \overset{\sim}{\longrightarrow} F(G')$ is an isometry.  The elements $C_1,\dots,C_f$ are irreducible, so their images are oriented cycles $C_1',\dots,C_f' \subset G'$, and $C_i' + C_j'$ is simple for all $i \ne j$.  It follows that no three distinct cycles $C_i', C_j', C_k'$ have an edge in common, since two of them would have to induce the same orientation on it, in violation of Lemma \ref{l: simple}.  Therefore,
\[
\sum_{i < j} |E(C_i') \cap E(C_j')| \le |E(G')|.
\]
On the other hand,
Lemma \ref{l: simple} gives
\[
\sum_{i < j} |E(C_i) \cap E(C_j)| = \sum_{i < j} - C_i \cdot C_j = \sum_{i < j} -C_i' \cdot C_j' = \sum_{i < j} |E(C_i') \cap E(C_j')|.
\]
Combining the indented equations yields $|E(G)| \le |E(G')|$.  By symmetry, the statement of the Theorem follows.
\end{proof}

\begin{proof}[Proof of Theorem \ref{t: tait}]
Let $D$ and $D''$ denote two connected, reduced, alternating diagrams of the same link $L$.  Color them according to the convention that every crossing has type $b$.  Let $W$ denote the white checkerboard surface for $D$ and $B''$ the black checkerboard surface for $D''$.  By Theorem \ref{t: alt definite} and Corollary \ref{c: reduced}, there exists a reduced, alternating diagram $D'$ whose white checkerboard surface $W'$ is isotopic to $W$ and whose black checkerboard surface $B'$ is isotopic to $B''$.  Let $G$ denote the Tait graph of $D$ and $G'$ the Tait graph of $D'$.  Since $W \simeq W'$, it follows from Theorem \ref{t: flows} that $F(G) \approx F(G')$.  By Theorem \ref{t: torelli}, it follows that $c(D) = |E(G)|=|E(G')| = c(D')$.  Similarly, $c(D') = c(D'')$, and the first part of the Theorem follows.

For the second part, orient the diagrams.  Let $p(\cdot)$ and $n(\cdot)$ denote the number of positive and negative crossings in a diagram, respectively.
By Theorem \ref{t: gl}, \cite[Lemma 7]{gl:sig}, and the fact that every crossing has type $b$, it follows that $\sigma(L) = \sigma(W)-p(D) = \sigma(W')-p(D')$.  Since $W \simeq W'$, it follows that $p(D) = p(D')$.  Since $c(D) = c(D')$ by the first part of the Theorem, it follows that $n(D) =n(D')$, as well.  Therefore, $D$ and $D'$ have the same writhe $p(D)-n(D) = p(D')-n(D')$.  Similarly, $D'$ and $D''$ have the same writhe, and the second part of the Theorem follows.
\end{proof}


\section{Appendix: Algorithmic Detection of Alternating Links. \\ {\small By Andr\'as Juh\'asz and Marc Lackenby}}

This appendix contains the proof of Theorem \ref{t: algorithm}.

We are given a diagram $D$ for a prime knot $K$ with $c$ crossings.  If the knot $K$ is alternating, then it has a reduced alternating diagram having $c' \leq c$ crossings, by a theorem of Kauffman, Murasugi, and Thistlethwaite \cite{kauff:altknots, murasugi:altknots, thistle:spanning}. The chessboard surfaces $S_1$ and $S_2$ for this diagram have the following properties: \begin{inparaenum} \item $\chi(S_1) + \chi(S_2) = 2 - c' \geq 2 - c$; \item the Gordon-Litherland pairings of $S_1$ and $S_2$ are positive definite and negative definite, respectively; \item they are incompressible, boundary-incompressible, and $\pi_1$-injective \cite{aumann}.  \end{inparaenum}  Conversely, by Theorem \ref{t: alt definite}, the existence of such spanning surfaces $S_1$ and $S_2$ satisfying (2) imply that the knot is alternating. We need to show how to find these surfaces. We start by using $D$ to construct a triangulation $T$ of $X$, the exterior of $K$, with the property that a meridian is a subset $\Gamma$ of the 1-skeleton. The number of tetrahedra in $T$ can be bounded above by a linear function of $c$. Then we need the following result.

\begin{lem}
\label{l: normal}
The surfaces $S_1$ and $S_2$ can be realised as normal surfaces with respect to $T$. Each is a sum of at most $c$ fundamental normal surfaces.  The number of normal triangles and squares in $S_1$ and $S_2$ is at most an exponential function of $c$.
\end{lem}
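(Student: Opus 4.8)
The plan is to realise each $S_i$ as a least-weight normal surface, decompose it into fundamental pieces using additivity of the Euler characteristic, and then invoke the standard exponential bound on fundamental surfaces. We may assume $K$ is non-trivial, so $X$ is irreducible and $\partial$-irreducible, and we write $t$ for the number of tetrahedra of $T$, which is linear in $c$.

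\emph{Normalisation.}
Each $S_i$ is connected, incompressible, $\partial$-incompressible, and has non-empty boundary, so it has no sphere or disk component. First I would isotope $\partial S_i$ on $\partial X$ to a normal curve in the induced triangulation of the torus $\partial X$; its homology class is $\lambda + m_i\mu$, where $m_i = \tfrac12|[\partial S_i]|_{S_i}$ is the framing $S_i$ induces on $K$, and since $m_i$ is a signed count of crossings of the alternating diagram by \cite[Lemma 7]{gl:sig} we have $|m_i| \le c' \le c$, so the normalised boundary curve meets the $O(c)$ edges of $\partial X$ a total of $O(c)$ times. Then I would run Haken's normalisation procedure in the interior of $X$: because $S_i$ is incompressible and $\partial$-incompressible and $X$ is irreducible and $\partial$-irreducible, no step of the procedure destroys the surface, and it terminates at a normal surface isotopic to $S_i$. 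Finally, replace $S_i$ by a least-weight normal surface in its isotopy class.

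\emph{Fundamental decomposition.}
Peeling off summands of strictly smaller weight writes $S_i = F_{i,1} + \cdots + F_{i,k_i}$ as a Haken sum of non-empty fundamental normal surfaces, and I claim $k_i \le c$. Euler characteristic is additive over Haken sums, so $\sum_j \chi(F_{i,j}) = \chi(S_i) = 1 - b_1(S_i)$; together with property (1) this gives $-\chi(S_i) \le c - 1$. Because $S_i$ has least weight, the summands can be taken least weight in their own isotopy classes, hence incompressible and $\partial$-incompressible, so none is a sphere, a disk, or $\partial$-parallel, and each has $\chi(F_{i,j}) \le 0$; thus the summands with negative Euler characteristic number at most $-\chi(S_i) \le c - 1$. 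For the remaining summands (essential annuli and tori) one uses that boundary slopes and multiplicities add under Haken summation on $\partial X$ while $\partial S_i$ is a single curve, so at most one summand has boundary; the rest are closed essential tori, and a Kneser--Haken count bounds their number by $O(t) = O(c)$. Absorbing constants into the linear bound on $t$ gives $k_i \le c$.

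\emph{Size bound and main obstacle.}
A fundamental normal surface in a triangulation with $t$ tetrahedra has at most $2^{O(t)}$ normal triangles and squares, by the Hadamard-type estimate on the vertices of the solution cone of the matching equations; since $t = O(c)$ and $S_i$ is a Haken sum of $k_i \le c$ such surfaces, each with coefficient one, $S_i$ has at most $c\cdot 2^{O(c)} = 2^{O(c)}$ normal disks. The delicate point is the \emph{linear} bound $k_i \le c$ on the number of fundamental summands, as opposed to the merely exponential bound coming directly from the weight of $S_i$: it relies on the fact that the summands of a least-weight Haken decomposition of an essential surface are themselves essential, on a Kneser--Haken count of the closed torus summands, and on the observation that $\partial S_i$ is connected, which forces all but one summand to be closed.
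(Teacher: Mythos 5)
Your overall architecture (normalise, decompose into fundamental summands, bound the number of summands, then invoke the Hass--Lagarias exponential bound) matches the paper's, but the step where you bound the number of summands has a genuine gap, and it is exactly the step the paper's proof is designed to handle. Your Euler characteristic count controls only the summands with $\chi<0$; the problem is the summands with $\chi=0$, i.e.\ normal annuli and tori, and your two mechanisms for controlling them do not work. First, the claim that ``at most one summand has boundary'' because $\partial S_i$ is a single curve is false for Haken sums on the boundary torus: a single essential normal curve can be the geometric sum of several normal curves of different slopes (e.g.\ a $(1,1)$-curve is the sum of a $(1,0)$- and a $(0,1)$-curve), so the connectedness of $\partial S_i$ does not force all but one summand to be closed. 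Second, the Kneser--Haken theorem counts \emph{disjoint, pairwise non-parallel} embedded incompressible surfaces; it says nothing about the multiplicities with which a fundamental normal torus can occur in a Haken sum, and since tori are invisible to both Euler characteristic and boundary data, nothing in your argument bounds those multiplicities. (Your later phrase ``each with coefficient one'' is likewise unjustified.)

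The missing input is topological, not combinatorial: one must know that the exterior of a prime alternating knot contains no essential tori and that its only essential annuli meet a meridian (Menasco's theorem). The paper encodes this by putting the meridian $\Gamma$ into the $1$-skeleton as a boundary pattern, so that $(X,\Gamma)$ is simple in Matveev's sense, and by replacing $-\chi$ with Matveev's $p$-complexity $-\chi(S)+|S\cap\Gamma|$, which is \emph{strictly positive} on every summand that can occur (normal tori are excluded by \cite[Proposition 6.3.21]{matveev}, and annuli are forced to meet $\Gamma$). Then the number of summands is at most the $p$-complexity of $S_i$, which is linear in $c$ by property (1). Without some such device your bound on $k_i$ does not close, and even where your count does apply it yields $k_i=O(c)$ rather than $k_i\le c$. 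A secondary point: the $S_i$ are in general one-sided, so the off-the-shelf statements about least-weight incompressible surfaces need the adjustment the paper makes explicitly (it applies the \emph{proof}, not the statement, of \cite[Theorem 6.3.17]{matveev} to get the fundamental decomposition).
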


\begin{proof}
This is a fairly well-known application of normal surface theory. We give the manifold $X$ the boundary pattern $\Gamma$. Then $(X, \Gamma)$ is simple in the sense of \cite[Definition 6.3.16]{matveev}. This is because of Menasco's theorem that the exterior of a prime alternating knot contains no essential torus and the only essential annuli arise as the obvious annuli for a $(2,n)$ torus knot, but these necessarily intersect $\Gamma$ \cite[Corollary 2]{menasco:alternating}. In [Mat03, Definition 6.3.8], Matveev defines the $p$-complexity of the normal surface $S_i$ to be $-\chi(S_i) + |S_i \cap \Gamma|$, and this is at most a linear function of $c$, by (1) above.  By \cite[Theorem 6.3.17]{matveev}, one can construct a finite list of normal surfaces with the property that any 2-sided properly embedded incompressible, boundary-incompressible, connected surface with at most this $p$-complexity is strongly equivalent to one in this list. Here, strongly equivalent just means that there is a homeomorphism of the pair $(X, \Gamma)$ taking it to one of these normal surfaces.  Now, the surfaces $S_i$ need not be 2-sided, but the proof of \cite[Theorem 6.3.17]{matveev} gives that $S_i$ is a sum $\sum \lambda_j F_j$ of fundamental normal surfaces. In fact, the only $F_j$ that appear in the sum have positive $p$-complexity. (No normal tori appear in the sum, using \cite[Proposition 6.3.21]{matveev}.) Hence, the number of summands for $S_i$ is at most the $p$-complexity, which is at most $c$ by (1). By a result of Hass and Lagarias, the number of triangles and squares in a fundamental normal surface is at most an exponential function of the number of tetrahedra \cite[Lemma 2.3]{hl2001}. Hence, this gives the final part of the Lemma.
\end{proof}

Assuming Lemma 6.1, the algorithm simply constructs all such normal surfaces. Since the number of triangles and squares in each surface $S_i$ is at most an exponential function of $c$, and the number of triangle and square types is at most a linear function of $c$, the number of possible normal surfaces we must consider is at most an exponential function of $c^2$. For each surface $S_i$, one has an explicit decomposition of the surface into triangles and squares. From this, one can find a spanning set for $H_1(S_i)$ in the 1-skeleton of $S_i$. One can then reduce this to a basis for $H_1(S_i)$ using linear algebra. The size of this basis is at most a linear function of $c$, because of (1). So we can compute the Gordon-Litherland pairing and then determine whether it is positive or negative definite.

\bibliographystyle{myalpha}
\bibliography{/Users/greenegh/Dropbox/Papers/References}

\providecommand{\bysame}{\leavevmode\hbox to3em{\hrulefill}\thinspace}
\providecommand{\MR}{\relax\ifhmode\unskip\space\fi MR }
\providecommand{\MRhref}[2]{%
  \href{http://www.ams.org/mathscinet-getitem?mr=#1}{#2}
}
\providecommand{\href}[2]{#2}
\begin{thebibliography}{Aum56}

\bibitem[Aum56]{aumann}
Robert~J. Aumann, \emph{Asphericity of alternating knots}, Ann. of Math. (2)
  \textbf{64} (1956), 374--392.

\bibitem[Ban11]{banks:special}
Jessica~E. Banks, \emph{Minimal genus {S}eifert surfaces for alternating
  links}, {\tt math.GT/1106.3180} (2011).

\bibitem[CK02]{chako2002}
Jae~Choon Cha and Ki~Hyoung Ko, \emph{Signatures of links in rational homology
  spheres}, Topology \textbf{41} (2002), no.~6, 1161--1182.

\bibitem[GL78]{gl:sig}
Cameron~McA. Gordon and Richard~A. Litherland, \emph{On the signature of a
  link}, Invent. Math. \textbf{47} (1978), no.~1, 53--69.

\bibitem[Goe33]{goeritz}
Lebrecht Goeritz, \emph{Knoten und quadratische {F}ormen}, Math. Z. \textbf{36}
  (1933), no.~1, 647--654.

\bibitem[GR01]{godsilroyle:book}
C.~Godsil and G.~Royle, \emph{Algebraic graph theory}, Graduate Texts in
  Mathematics, vol. 207, Springer-Verlag, New York, 2001.

\bibitem[HL01]{hl2001}
Joel Hass and Jeffrey~C. Lagarias, \emph{The number of {R}eidemeister moves
  needed for unknotting}, J. Amer. Math. Soc. \textbf{14} (2001), no.~2,
  399--428 (electronic).

\bibitem[HS97]{hirasawasakuma}
Mikami Hirasawa and Makoto Sakuma, \emph{Minimal genus {S}eifert surfaces for
  alternating links}, K{NOTS} `96 ({T}okyo), World Sci. Publ., River Edge, NJ,
  1997, pp.~383--394.

\bibitem[Kau87]{kauff:altknots}
Louis~H. Kauffman, \emph{State models and the {J}ones polynomial}, Topology
  \textbf{26} (1987), no.~3, 395--407.

\bibitem[KT76]{kauffmantaylor}
Louis~H. Kauffman and Laurence~R. Taylor, \emph{Signature of links}, Trans.
  Amer. Math. Soc. \textbf{216} (1976), 351--365.

\bibitem[Lic97]{lickorish:book}
W.~B.~R. Lickorish, \emph{An introduction to knot theory}, Graduate Texts in
  Mathematics, vol. 175, Springer-Verlag, New York, 1997.

\bibitem[Mat03]{matveev}
Sergei Matveev, \emph{Algorithmic topology and classification of 3-manifolds},
  Algorithms and Computation in Mathematics, vol.~9, Springer-Verlag, Berlin,
  2003.

\bibitem[Men84]{menasco:alternating}
William Menasco, \emph{Closed incompressible surfaces in alternating knot and
  link complements}, Topology \textbf{23} (1984), no.~1, 37--44.

\bibitem[MT91]{mt:tait}
William Menasco and Morwen~B. Thistlethwaite, \emph{The {T}ait flyping
  conjecture}, Bull. Amer. Math. Soc. (N.S.) \textbf{25} (1991), no.~2,
  403--412.

\bibitem[Mur87]{murasugi:altknots}
Kunio Murasugi, \emph{Jones polynomials and classical conjectures in knot
  theory}, Topology \textbf{26} (1987), no.~2, 187--194.

\bibitem[Sei35]{seifert}
Herbert Seifert, \emph{Die {V}erschlingungsinvarianten der zyklischen
  {K}noten\"uberlagerungen}, Abh. Math. Sem. Univ. Hamburg \textbf{11} (1935),
  no.~1, 84--101.

\bibitem[Thi87]{thistle:spanning}
Morwen~B. Thistlethwaite, \emph{A spanning tree expansion of the {J}ones
  polynomial}, Topology \textbf{26} (1987), no.~3, 297--309.

\bibitem[Tro62]{trotter}
Hale~F. Trotter, \emph{Homology of group systems with applications to knot
  theory}, Ann. of Math. (2) \textbf{76} (1962), 464--498.

\end{thebibliography}

\end{document}